\theoremstyle{plain}
\newtheorem{theorem}[subsection]{Theorem}
\newtheorem{lemma}[subsection]{Lemma}
\newtheorem{prop}[subsection]{Proposition}
\theoremstyle{definition}
\newtheorem{defi}[subsection]{Definition}
\newtheorem{exe}[subsection]{Example}
\newtheorem{exs}[subsection]{Examples}
\newtheorem{rem}[subsection]{Remark}
\newtheorem{exercices}[subsection]{Exercices}
\newtheorem{ex}[subsection]{Exercice}
\newcounter{numexo}
\newcommand{\ba}{\begin{array}}
\newcommand{\ben}{\begin{enumerate}}
\newcommand{\bt}{\begin{tabular}}
\newcommand{\ea}{\end{array}}
\newcommand{\ed}{\end{diagram}}
\newcommand{\een}{\end{enumerate}}
\newcommand{\et}{\end{tabular}}
\newcommand{\bit}{\begin{itemize}}
\newcommand{\eit}{\end{itemize}}
\newcommand{\Eq}{\ensuremath{\mathsf{Eq}}}
\newcommand{\Ens}{\ensuremath{\mathsf{Set}}}
\newcommand{\Set}{\ensuremath{\mathsf{Set}}}
\newcommand{\Top}{\ensuremath{\mathsf{Top}}}
\newcommand{\Mod}{\ensuremath{\mathsf{Mod}}}
\newcommand{\Ann}{\ensuremath{\mathsf{Rng}}}
\renewcommand{\ker}{\ensuremath{\mathsf{ker\,}}}
\newcommand{\Cc}{\ensuremath{\mathbb{C}}}
\newcommand{\Ab}{\ensuremath{\mathsf{Ab}}}
\newcommand{\Grp}{\ensuremath{\mathsf{Grp}}}
\newcommand{\Gpt}{\ensuremath{\mathsf{Grp(Top)}}}
\newbox\pullbackbox
\begin{document}
\title{\emph{An introduction to regular categories} }
\author{Marino Gran}
\date{8 April, 2020}
\maketitle
\begin{abstract}
{This paper provides a short introduction to the notion of regular category and its use in categorical algebra. We first prove some of its basic properties, and consider some fundamental algebraic examples. We then ana-lyse the algebraic properties of the categories satisfying the additional Mal'tsev axiom, and then the weaker Goursat axiom. These latter contexts can be seen as the categorical counterparts of the properties of $2$-permutability and of $3$-permutability of congruences in universal algebra. Mal'tsev and Goursat categories have been intensively studied in the last years: we present here some of their basic properties, which are useful to read more advanced texts in categorical algebra.}
\end{abstract}
\section*{Introduction}
In categorical algebra some structural properties of varieties of universal algebras are investigated by replacing the arguments involving elements of an algebraic structure and its operations with other ones using relations and commutative diagrams. A typical example is provided by the study of \emph{Mal'tsev categories} \cite{CLP}, which can be seen as the categorical counterpart of \emph{Mal'tsev varieties} (in the sense of \cite{Smith}), also called $2$-permutable varieties in the literature. Instead of requiring the existence, in the algebraic theory of the variety, of a ternary term $p(x,y,z)$ verifying the identities $p(x,y,y)=x$ and $p(x,x,y)= y$, one asks that any internal reflexive relation in the category is an equivalence relation. This categorical property, with its many equivalent formulations, has turned out to be strong enough to establish, in the regular context, many of the well known properties of Mal'tsev varieties (see \cite{BGJ} for a recent survey on the subject, and the references therein).

This survey article can be seen as a first introduction to the basic categorical notions which are useful to express the exactness properties of various kinds of algebraic varieties in the sense of universal algebra. The main goal of this text is to introduce the reader to the notion of \emph{regular category}, which is fundamental in category theory, since abelian categories, elementary toposes and varieties of universal algebras are all regular categories. 
Special attention will be paid to the so-called \emph{calculus of relations}, which provides a powerful method to prove new results in regular categories, possibly satisfying some additional exactness conditions. A good knowledge of the fundamentals of regular categories are useful to understand many of the recent developments in categorical algebra.
 The Mal'tsev axiom gives the opportunity to illustrate this method: in a regular category this axiom is equivalent to the permutability of the composition of relations, in the sense that any pair $R$ and $S$ of equivalence relations on a given object are such that $R \circ S = S \circ R$.
  Some recent results concerning the more general \emph{Goursat categories} \cite{CKP, GRT} will then be explained in the last section.
  These aspects are useful to illustrate many of the links between exactness properties in categorical algebra, the so-called Mal'tsev conditions in universal algebra, and the validity of suitable homological lemmas \cite{Lack, GR, GR2}.
  \vspace{3mm}
  
{\bf Acknowledgement.} A part of the material presented in this survey article is based on \cite{BG, Gran-EPFL, BGJ}. The author is grateful to Tomas Everaert for an important suggestion concerning Theorem \ref{BK}. Many thanks also to Diana Rodelo, Idriss Tchoffo Nguefeu and David Broodryk for carefully proofreading a first version of the article.
\section{Regular categories}
The notion of regular category plays an important role in the categorical understanding of algebraic structures. Regular categories capture some fundamental exactness properties shared by the categories $\Ens$ of sets, $\Grp$ of groups, $\Ab$ of abelian groups, R-$\Mod$ of modules on a commutative ring R and, more generally, by any variety $\mathbb V$ of universal algebras. Topological models of ``good'' algebraic theories, such as the categories $\mathsf{Grp(Top)}$ of topological groups and $\mathsf{Grp(Comp)}$ of compact Hausdorff groups are also regular. Other examples will be considered later on in Examples \ref{regular-ex} and \ref{exemples-M}. The basic idea is that any arrow in a regular category can be factorized through an (essentially unique) \emph{image}, and that these factorizations are stable under pullbacks.

Regular categories also have a prominent role in categorical logic (see \cite{Johnstone-elephant}, for instance, and the references therein). However, in this introductory course we shall only focus on the algebraic examples, with the goal of illustrating the importance of regular categories in categorical algebra.

In order to understand the notion of regular category it is useful to compare a few types of epimorphisms: this will be the subject of the following section (see \cite{BG} for further details).
\subsection*{Strong and regular epimorphisms}




\begin{defi}
An arrow $f \colon A \rightarrow B$ in a category $\mathbb C$ is a \emph{strong epimorphism} if, given any commutative square
$$
\xymatrix{ A \ar[r]^{f} \ar[d]_g & B \ar[d]^h \ar@{.>}[dl]_t \\
C \ar[r]_m & D
}
$$
in $\mathbb C$, where $m \colon C \rightarrow D$ is a monomorphism, there exists a unique arrow $t \colon B \rightarrow C$ such that $m \circ t = h$ and $t \circ f= g$.
\end{defi}

\begin{rem}\label{Epi fort-epi}
If the category $\mathbb C$ has binary products, then every strong epimorphism is an epimorphism. Indeed, if $f \colon A \rightarrow B$ is a strong epimorphism, and $u,v \colon B \rightarrow C$ are two arrows such that $u \circ f =v \circ f$, one can then consider the diagonal $(1_C, 1_C)= \Delta \colon C \rightarrow C \times C$ and the commutative square
$$
\xymatrix{ A \ar[r]^{f} \ar[d]_{u \circ f =v \circ f} & B \ar[d]^{(u,v)} \\
C \ar[r]_-{\Delta} & C \times C.
}
$$
Since $\Delta$ is a monomorphism, there is a unique $t \colon B \rightarrow C$ such that $\Delta \circ t = (u,v)$ and $t \circ f = u \circ f =v \circ f$. It follows that $$u= p_1 \circ (u,v)  = p_1 \circ \Delta \circ t = t = p_2 \circ \Delta \circ t = p_2 \circ (u,v)= v,$$
where $p_1 \colon C \times C \rightarrow C$ and $p_2 \colon C \times C \rightarrow C$ are the product projections.
\end{rem}

\begin{lemma}
An arrow $f \colon A \rightarrow B$ is an isomorphism if and only if $f\colon A \rightarrow B$ is a monomorphism and a strong epimorphism.
\end{lemma}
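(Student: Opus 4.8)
The plan is to prove the two implications separately. The forward direction is essentially formal: if $f \colon A \rightarrow B$ is an isomorphism, then it is certainly a monomorphism (it is split mono via $f^{-1}$), and it is a strong epimorphism because in any commutative square with $m \circ f^{-1} \cdot h$... more precisely, given a square with $m \colon C \rightarrow D$ a monomorphism, $h \colon B \rightarrow D$, $g \colon A \rightarrow C$ satisfying $m \circ g = h \circ f$, one sets $t = g \circ f^{-1}$; then $t \circ f = g$ immediately, and $m \circ t = m \circ g \circ f^{-1} = h \circ f \circ f^{-1} = h$, while uniqueness of $t$ follows since $f$ is epi (or directly since $t \circ f = g$ forces $t = g \circ f^{-1}$).

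For the converse, suppose $f$ is simultaneously a monomorphism and a strong epimorphism. The idea is to apply the strong epimorphism property to a cleverly chosen square in which $f$ itself plays the role of the monomorphism $m$. Consider the commutative square
$$
\xymatrix{ A \ar[r]^{f} \ar[d]_{1_A} & B \ar[d]^{1_B} \\
A \ar[r]_{f} & B.
}
$$
Here the bottom arrow $f \colon A \rightarrow B$ is a monomorphism by hypothesis, so by the strong epimorphism property of the top arrow $f$ there exists a (unique) $t \colon B \rightarrow A$ with $f \circ t = 1_B$ and $t \circ f = 1_A$. These two equations say precisely that $t$ is a two-sided inverse of $f$, hence $f$ is an isomorphism.

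The main thing to be careful about is simply the placement of $f$ in both roles of the diagram — as the epimorphic top edge and as the monomorphic bottom edge — and checking that the square genuinely commutes (it does, trivially, since both composites equal $f$). No step here is a real obstacle; the proof is short and formal once one hits upon using the identity square. One could alternatively phrase the converse without drawing a diagram: since $f$ is a strong epi and $f = f \circ 1_A = 1_B \circ f$ exhibits $f$ factoring through the mono $f$, the diagonal fill-in $t$ provides the inverse.
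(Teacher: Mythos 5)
Your proof is correct and follows exactly the same route as the paper: the converse is obtained by applying the strong-epimorphism lift to the identity square in which $f$ appears as both the top edge and the (monomorphic) bottom edge, yielding a two-sided inverse $t$. The forward direction, which the paper dismisses as obvious, is spelled out correctly in your version.
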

\begin{proof}
If $f$ is both a strong epimorphism and a monomorphism, one considers the commutative square
$$
\xymatrix{ A \ar[r]^{f} \ar[d]_{1_A} & B \ar[d]^{1_B} \ar@{.>}[dl]_t \\
A \ar[r]_-{f} & B.
}
$$
The unique (dotted) arrow $t \colon B \rightarrow A$ such that $f \circ t = 1_B$ and $t \circ f= 1_A$ is the inverse of $f$. The converse implication is obvious. \end{proof}

\begin{exercices}\label{exo-strong}
Prove that strong epimorphisms are closed under composition, and that, if the composite $g \circ f$ of two composable arrows is a strong epimorphism, then $g$ is a strong epimorphism.
Show that if $g \circ f$ is a strong epimorphism, with $g$ a monomorphism, then $g$ is an isomorphism.
\end{exercices}

\begin{defi}
An arrow  $f \colon A \rightarrow B$ is a \emph{regular epimorphism} if it is the coequalizer of two arrows in $\mathbb C$. 
\end{defi}

\begin{defi}
A  \emph{split epimorphism} is an arrow $f \colon A \rightarrow B$ such that there is an arrow $i \colon B \rightarrow A$ with $f \circ i = 1_B$.
\end{defi}
Observe that the axiom of choice in the category $\Ens$ says precisely that any epimorphism is a split epimorphism. This is not the case in the categories $\Grp$ of groups or $\Ab$ of abelian groups, for instance. We are now going to prove the following chain of implications:
\begin{prop}\label{epis}
Let $\mathbb C$ be a category with binary products. One then has the implications

\centerline{\rm split epimorphism $\Rightarrow$ regular epimorphism $\Rightarrow$ strong epimorphism $\Rightarrow$ epimorphism}
\end{prop}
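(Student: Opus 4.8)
The plan is to prove the three implications one at a time. The rightmost one, strong epimorphism $\Rightarrow$ epimorphism, has already been established in Remark~\ref{Epi fort-epi} using the binary products, so only the first two implications require a new argument, and neither of those needs the product hypothesis.

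For \emph{split epimorphism $\Rightarrow$ regular epimorphism}, suppose $f \colon A \rightarrow B$ has a section $i \colon B \rightarrow A$, so that $f \circ i = 1_B$. I would show that $f$ is the coequalizer of the pair $1_A, \, i \circ f \colon A \rightarrow A$. First, $f \circ 1_A = f = (f \circ i) \circ f = f \circ (i \circ f)$, so $f$ does coequalize the pair. Now given any $g \colon A \rightarrow C$ with $g \circ 1_A = g \circ (i \circ f)$, that is $g = g \circ i \circ f$, set $t := g \circ i \colon B \rightarrow C$; then $t \circ f = g \circ i \circ f = g$. For uniqueness, if $t' \colon B \rightarrow C$ satisfies $t' \circ f = g$, then $t' = t' \circ f \circ i = g \circ i = t$. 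Hence $f = \mathrm{coeq}(1_A, i \circ f)$ is a regular epimorphism.

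For \emph{regular epimorphism $\Rightarrow$ strong epimorphism}, let $f \colon A \rightarrow B$ be the coequalizer of a pair $u, v \colon R \rightarrow A$, and consider a commutative square $m \circ g = h \circ f$ with $m \colon C \rightarrow D$ a monomorphism (notation as in the definition of strong epimorphism). From $f \circ u = f \circ v$ one obtains $m \circ g \circ u = h \circ f \circ u = h \circ f \circ v = m \circ g \circ v$, and since $m$ is a monomorphism this forces $g \circ u = g \circ v$. The universal property of the coequalizer then yields a unique $t \colon B \rightarrow C$ with $t \circ f = g$. It remains to check $m \circ t = h$: from $m \circ t \circ f = m \circ g = h \circ f$ together with the fact that a coequalizer is an epimorphism, we get $m \circ t = h$. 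Finally, since $t$ is already the unique arrow with $t \circ f = g$, it is in particular the unique arrow satisfying both required equations, so $f$ is a strong epimorphism.

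The only slightly delicate points, which I would be careful to state explicitly, are the two ``automatic'' facts used in the second implication: that a monomorphism is left-cancellable (to pass from $m \circ g \circ u = m \circ g \circ v$ to $g \circ u = g \circ v$), and that a coequalizer is an epimorphism (to pass from $m \circ t \circ f = h \circ f$ to $m \circ t = h$; this also shows directly that regular, and hence split, epimorphisms are epimorphisms). Neither is difficult — the latter follows because if $a \circ f = b \circ f$ then both $a$ and $b$ factor through the coequalizer $f$ via the same arrow — but the coequalizer-is-epi step is the one place where the argument would look incomplete if left unmentioned.
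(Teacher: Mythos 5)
Your proof is correct and follows essentially the same route as the paper: exhibiting a split epimorphism as the coequalizer of $1_A$ and $i\circ f$, and deducing strongness of a regular epimorphism by cancelling the monomorphism $m$ and invoking the universal property of the coequalizer. The extra care you take in justifying $m\circ t = h$ (via the fact that a coequalizer is an epimorphism) fills in a step the paper leaves implicit, but it is the same argument.
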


\begin{proof}
If $f \colon A \rightarrow B$ is split by an arrow $i \colon B \rightarrow A$, then $f$ is the coequalizer of  $1_A$ and $i \circ f$. Indeed, one sees that
$f \circ (i \circ f )= f = f \circ 1_A.$
Moreover, if $g \colon A \rightarrow X$ is such that $g \circ( i \circ f) = g \circ 1_A$, then $\phi = g \circ i$ is the only arrow with the property that $\phi  \circ f = g$.
 
 \vspace{3mm}
 
 Assume then that $f \colon A \rightarrow B$ is a regular epimorphism. It is then the coequalizer of two arrows, say $u \colon T \rightarrow A$ and $v \colon T \rightarrow A$:
 consider the commutative diagram
 $$
\xymatrix{  A \ar[r]^{f} \ar[d]_g & B \ar[d]^h \\
C \ar[r]_m & D
}
$$
 with $m$ a monomorphism. The equalities $$m \circ g \circ u = h \circ f \circ u = h \circ f \circ v = m \circ g \circ v$$
imply that $g \circ u = g \circ v$. The universal property of the coequalizer $f$ implies that there is a unique $t \colon B \rightarrow C$ such that $t \circ f = g$. This arrow $t$ is also such that $m \circ t = h$, so that $f$ is a strong epimorphism.

\vspace{3mm} The fact that any strong epimorphism is an epimorphism when $\mathbb C$ has binary products has been shown in Remark \ref{Epi fort-epi}.
\end{proof}

\subsection*{Quotients in algebraic categories}
Let us then consider some examples of \emph{quotients} in the categories of sets, of groups and of topological groups, which will be useful to explain the general construction in regular categories.

Let $f \colon A \rightarrow B$ be a map in $\Ens$, and $$\Eq(f)= \{(x,y) \in A \times A \mid f(x)=f(y) \}$$ its \emph{kernel pair}, i.e. the equivalence relation on $A$ identifying the elements of $A$ having the same image by $f$.
 This equivalence relation can be obtained by building  the pullback of $f$ along $f$:
\begin{equation}\label{kernel-pair}
\xymatrix{\Eq(f) \ar[r]^{p_2} \ar[d]_{p_1} & A \ar[d]^f \\
A \ar[r]_f & B.
}
\end{equation}
\begin{ex}\label{pairenoyau}
Show that  any regular epimorphism $f$ in a category with kernel pairs is the coequalizer of its kernel pair $(\Eq(f), p_1, p_2)$.
\end{ex}
In the category $\Ens$ of sets one sees that
 the canonical quotient $\pi \colon A \rightarrow A/\Eq(f)$ defined by $\pi(a) = \overline{a}$ is the coequalizer of $p_1$ and $p_2$. This yields a unique arrow $i \colon 
 A/\Eq(f) \rightarrow B$ such that $i \circ \pi = f$:
$$ \xymatrix{\Eq(f) \ar@<2pt>[r]^-{p_1}\ar@<-2pt>[r]_-{p_2} &A \ar[dr]_{\pi}  \ar[rr]^f & & B \\
& & A/\Eq(f) \ar@{.>}[ru]_i & }$$
The map $i$ is defined by $i(\overline{a})=f(a)$ for any $\overline{a} \in A/\Eq(f)$. This gives a factorization $i \circ \pi$ of the arrow $f$, where $\pi$ is a regular epimorphism (= a surjective map) and $i$ is a monomorphism (= an injective map) in the category $\Ens$.

The same construction is possible in the category $\Grp$ of groups. Indeed, given a group homomorphism $f \colon G \rightarrow G'$, one can consider the kernel pair $\Eq(f)$  which is again obtained by the pullback \eqref{kernel-pair} above, but this time computed in the category $\Grp$. The equivalence relation $\Eq(f)$ is a group, as a subgroup of the product $G \times G$ of the group $G$ with itself. 
The canonical quotient $\pi \colon G \rightarrow G / \Eq(f)$ is a group homomorphism, and this allows one to build the following commutative diagram in $\Grp$
$$ \xymatrix{\Eq(f) \ar@<2pt>[r]^-{p_1}\ar@<-2pt>[r]_-{p_2} &G \ar[dr]_{\pi}  \ar[rr]^f & & G' \\
& & {G/\Eq(f),} \ar@{.>}[ru]_i & }$$
where $\pi$ is a regular epimorphism and $i$ is a monomorphism, exactly as in $\Set$.

In the category $\Gpt$ of topological groups, where the arrows are continuous homomorphisms, it is again possible to obtain the same kind of factorization regular epimorphism-monomorphism for any arrow. We write $(G, \cdot, \tau_G)$ for a topological group, where $\tau_G$ is the topology making both the multiplication $\cdot$ and the inversion of the group continuous.
Given a continuous homomorphism $f \colon (G, \cdot, \tau_G) \rightarrow (G', \cdot, \tau_{G'})$ in $\Gpt$, the kernel pair $(\Eq(f), \cdot, \tau_i)$ is a topological group for the topology $\tau_i$ induced by the product topology $\tau_{G \times G}$ of the topological group $(G \times G, \cdot, \tau_{G \times G})$.
At the algebraic level the quotients in $\Gpt$ are actually computed as in $\Grp$, and then equipped with the quotient topology $\tau_q$. In this way one gets the following commutative diagram 
$$ \xymatrix{(Eq(f), \cdot, \tau_i) \ar@<2pt>[r]^-{p_1}\ar@<-2pt>[r]_-{p_2} &(G, \cdot, \tau_G) \ar[dr]_{\pi}  \ar[rr]^f & & (G',\cdot, \tau_{G'}) \\
& & {(G/\Eq(f), \cdot, \tau_q) } \ar@{.>}[ru]_i & }$$
where $\pi$ is the canonical quotient.
It turns out that $\pi$ is the coequalizer of the projections $p_1$ and $p_2$ in $\Gpt$, and the induced arrow
 $$i \colon (G/\Eq(f), \cdot, \tau_q) \rightarrow (G',\cdot, \tau_{G'})$$ is a monomorphism (since it is injective). Note that this factorization is not the one where the direct image $f(G)$ of the continuous homomorphism is equipped with the subspace topology induced by the topology of $(G',\cdot, \tau_{G'})$.
\vspace{3mm}

There are many other categories where the same construction as in $\Ens, \Grp$ and $\Gpt$ is possible, for instance in the category $\Ann$ of rings, $\mathsf{Mon}$ of monoids, $\Ab$ of abelian groups and, more generally, in any variety $\mathbb V$ of universal algebras.

All these are examples of regular categories in the following sense:
\begin{defi}\label{def1}\cite{Barr}
A finitely complete category $\Cc$ is \emph{regular} if
\begin{itemize}
\item coequalizers of kernel pairs exist in $\Cc$;
\item regular epimorphisms are pullback stable in $\Cc$.
\end{itemize}
\end{defi}
\begin{exs}\label{regular-ex}
\begin{itemize}
\item The category ${\Ens}$ is regular. We have observed that the coequalizers of kernel pairs exist in $\Ens$, and it remains to check the pullback stability of regular epimorphisms. 
Consider a pullback
$$\xymatrix{E \times_B A  \ar[r]^-{\pi_2} \ar[d]_{\pi_1}& A \ar[d]^{f} \\
E \ar[r]_p & B
}$$
in $\Ens$ where $p$ is a surjective map  (i.e. a regular epimorphism), and let us show that $\pi_2$ is also surjective. Let $a$ be an element in $A$; there exists then an  $e \in E$ such that $p(e)=f(a)$. This shows that there is an $(e,a) \in E \times_B A$ such that $\pi_2 (e, a) = a$, and $\pi_2$ is surjective. The same argument still works in the category $\Grp$ of groups, by taking into account the fact that regular epimorphisms therein are precisely the surjective homomorphisms, and that pullbacks are computed in $\Grp$ as in $\Ens$. For essentially the same reason the categories $\Ann$ of rings, $\mathsf{Mon}$ of monoids, and R-$\Mod$ of modules on a ring $R$ are also regular categories. More generally, any variety $\mathbb V$ of universal algebras is a regular category, any quasivariety - such as the category $\mathsf{{Ab}_{t.f.}}$ of torsion-free abelian groups - and also any category monadic over the category of sets, as for instance the category $\mathsf{CHaus}$ of compact Hausdorff spaces, and the category $\mathsf{Frm}$ of frames.
\item The category $\Gpt$ of topological groups is regular \cite{CKP}. The main point here is that the canonical quotient $\pi \colon (H, \cdot, \tau_H) \rightarrow (H/\Eq(f), \cdot, \tau_q)$ of a topological group $(H, \cdot, \tau_H)$ by the equivalence relation $(\Eq(f), \cdot, \tau_i)$ which is the kernel pair of an arrow $f \colon (H, \cdot, \tau_H) \rightarrow (G, \cdot, \tau_G)$ in $\Gpt$ is an \emph{open surjective homomorphism}. 
To check this latter fact, let us write $K = \ker(\pi)$ for the kernel of $\pi$, and let us then show that $$\pi^{-1} (\pi (V)) = V \cdot K$$ for any open $V \in \tau_H$. On the one hand if  $z= v\cdot k$, where $v \in V$ and $k \in K$, one has $$\pi(z) = \pi (v\cdot k) = \pi(v)\cdot  \pi (k) = \pi(v) \in \pi (V),$$ so that $z \in \pi^{-1} (\pi(V))$.
Conversely, if $z \in  \pi^{-1} (\pi(V))$, then $\pi(z) = \pi (v_1)$, for a $v_1 \in V$, so that $v_1^{-1} \cdot z \in K$, and $z = v_1 \cdot k$, for a $k \in K$. \\
This implies that
$$\pi^{-1}( \pi(V) )= (\bigcup_{k \in K} V\cdot k ) \in \tau_H.$$
Indeed, the function $m_k \colon G \rightarrow G$ defined by $m_k (x)= x\cdot k$ for any $x \in G$ (with fixed $k \in K$) is a homeomorphism, hence $V \cdot k = m_k (V)  \in \tau_H$, since $V\in \tau_H.$
We have then shown that $\pi(V)$ is open for any $V \in \tau_H$, and the map $\pi$ is open. It follows that in $\Gpt$ the regular epimorphisms are the open surjective homomorphisms.
To conclude that $\Gpt$ is a re-gular category it suffices to recall that the open surjective homomorphisms are pullback stable (a well known fact which can be easily checked). More generally, the models of any Mal'tsev theory in the category of topological spaces, i.e. any category of topological Mal'tsev algebras, is a regular category \cite{J-P}. Notice that also the category $\mathsf{Grp(Haus)}$ of Hausdorff groups, or $\mathsf{Grp(Comp)}$ of compact Hausdorff groups are also regular \cite{BC}.
\item As mentioned in the introduction any abelian category \cite{Buch} is a regular category, as is any elementary topos \cite{Johnstone-elephant}.
\item The category $\Top$ of topological spaces, unlike $\Gpt$, is not regular. The main reason is that in $\Top$ regular epimorphisms are quotient maps, and these are not pullback stable (see \cite{Borceux} for a counter-example, for instance).
\end{itemize}
\end{exs}
We are now going to show that any arrow in a regular category has a canonical factorization as a regular epimorphism followed by a monomorphism, exactly as in the examples  of the categories $\Ens, \Grp$ and $\Gpt$ recalled here above. 

\begin{theorem}\label{theo1} Let $\Cc$ be a regular category. Then
\begin{enumerate}
\item any arrow $f \colon A \rightarrow B$ in $\Cc$ has a factorization $f= m \circ q$, with $q$ a regular epimorphism and $m$ a monomorphism; 
\item this factorization is unique (up to isomorphism).
\end{enumerate}
\end{theorem}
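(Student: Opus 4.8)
The plan is to construct the factorization by imitating the set-theoretic construction: take the kernel pair of $f$, form its coequalizer, and show the induced arrow to $B$ is a monomorphism. Concretely, first I would form the kernel pair $(\Eq(f), p_1, p_2)$ of $f$ as the pullback of $f$ along itself, which exists since $\Cc$ is finitely complete. Since $p_1 \circ p_i = p_2 \circ p_i$ trivially fails, I instead note that $f \circ p_1 = f \circ p_2$ by construction, so by the assumed existence of coequalizers of kernel pairs there is a regular epimorphism $q \colon A \to I$ coequalizing $p_1$ and $p_2$, and the universal property yields a unique $m \colon I \to B$ with $m \circ q = f$. This gives the candidate factorization $f = m \circ q$, and it only remains to prove that $m$ is a monomorphism. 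For this I would form the kernel pair $(\Eq(m), k_1, k_2)$ of $m$, and aim to show $k_1 = k_2$ (equivalently, that the diagonal $I \to \Eq(m)$ is an isomorphism), which is what it means for $m$ to be a monomorphism. The key input is that $q$ being a regular epimorphism pullback-stable, the pullback of $q$ along $q$ maps onto $\Eq(m)$ via a regular epimorphism — more precisely, there is an induced comparison arrow $\Eq(f) = \Eq(q \text{ w.r.t. } m)\cdots$; I would pull back $q \times q \colon A \times A$-type data along the inclusion $\Eq(m) \hookrightarrow I \times I$ to get a regular epimorphism $E \to \Eq(m)$ out of an object that factors through $\Eq(f)$, and then use that $q$ coequalizes $p_1, p_2$ to collapse $k_1, k_2$.

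In more detail, the main step is the following diagram chase. Consider the pullbacks exhibiting $\Eq(m)$ and compare with $\Eq(f)$: since $m q = f$, each element of $\Eq(f)$ gives an element of $\Eq(m)$, i.e. there is a canonical arrow $\Eq(f) \to \Eq(m)$ over $q \times q$, which I claim is a regular epimorphism. This is the heart of the argument and is exactly where pullback-stability of regular epimorphisms is used: $\Eq(f) \to \Eq(m)$ is obtained by pulling back the regular epimorphism $q$ (twice, or once along a product projection from $\Eq(m)$), hence is itself a regular epimorphism. Granting this, we get $k_1 \circ (\text{this epi}) = p_?$ composed with $q$ and similarly for $k_2$; but $q \circ p_1 = q \circ p_2$ since $q$ is the coequalizer of the kernel pair, so $k_1$ and $k_2$ agree after precomposition with a regular epimorphism, hence $k_1 = k_2$ (regular epis are epis, by Proposition \ref{epis}, as $\Cc$ has binary products). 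Therefore $m$ is a monomorphism and part (1) is done.

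For part (2), uniqueness up to isomorphism, suppose $f = m \circ q = m' \circ q'$ are two regular-epi/mono factorizations through objects $I$ and $I'$. The plan is to produce a comparison isomorphism $I \to I'$ using the universal property of the coequalizer of the kernel pair: since $m'$ is a monomorphism and $m' q' p_1 = f p_1 = f p_2 = m' q' p_2$, we get $q' p_1 = q' p_2$, so $q'$ factors through the coequalizer $q$ of $(p_1, p_2)$ via a unique $\varphi \colon I \to I'$ with $\varphi q = q'$; symmetrically $\psi \colon I' \to I$ with $\psi q' = q$. Then $\psi \varphi q = q$ and $\varphi \psi q' = q'$, and since $q, q'$ are (regular, hence) epimorphisms, $\psi \varphi = 1_I$ and $\varphi \psi = 1_{I'}$, so $\varphi$ is an isomorphism; it is compatible with the monos since $m' \varphi q = m' q' = f = m q$ and $q$ is epi, giving $m' \varphi = m$. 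The main obstacle in the whole proof is establishing that the comparison map $\Eq(f) \to \Eq(m)$ in part (1) is a regular epimorphism: one must set up the two pullback squares carefully (writing $\Eq(m)$ as the pullback of $m$ along $m$ and inserting the pullbacks of $q$ along the two legs) so that $\Eq(f) \to \Eq(m)$ appears literally as a pullback of $q$, and this bookkeeping with iterated pullbacks, while routine, is the only place where real structural input (pullback-stability of regular epis) enters.
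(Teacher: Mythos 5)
Your proposal is correct and follows essentially the same route as the paper: form the kernel pair, take its coequalizer $q$, and show $m$ is monic by exhibiting the comparison $\Eq(f)\to\Eq(m)$ as a composite of two pullbacks of $q$, hence an epimorphism along which $k_1$ and $k_2$ can be cancelled; your uniqueness argument via the coequalizer's universal property is an interchangeable variant of the paper's strong-epimorphism orthogonality hint. The only small caution is that you should conclude the comparison map is merely an epimorphism (a composite of epimorphisms), not assert it is a regular epimorphism, since closure of regular epimorphisms under composition is established only later, using this very theorem.
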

\begin{proof}
\begin{enumerate}
\item
Let $f \colon A \rightarrow B$ be an arrow in $\Cc$. Consider the diagram here below where
 $(\Eq(f), f_{1},f_{2})$ is the kernel pair of $f$, $q$ is the coequalizer of  $(f_{1},f_{2})$, and $m$ the unique arrow such that $m\circ q=f$.
\begin{equation}\label{Factorisation-regular}
\xymatrix{\Eq(f) \ar@<2pt>[r]^{f_1}\ar@<-2pt>[r]_{f_2}_{.}&A\ar[r]^{q}\ar[dr]_f&I \ar@{..>}[d]^m\\&&B}
\end{equation}
We need to show that $m$ is a monomorphism or, equivalently, that the projections $p_1 \colon \Eq(m) \rightarrow I $ and $p_2 \colon \Eq(m)\rightarrow I $ of the kernel pair of $m$ are equal.
For this, consider the diagram
$$\xymatrix{ \Eq(f) \ar[rr]^-{b}
          \ar[dd]_{a}&&
 \Eq(m) \times_I A \ar[rr]^-{\pi_2}\ar[dd]_{\pi_1}&& A\ar[dd]^{q}\\&&\\
 A\times_{I} \Eq(m) \ar[rr]_-{\phi_{2}}\ar[dd]_{\phi_{1}}&&\Eq(m) \ar[rr]_-{p_{2}}\ar[dd]_{p_{1}} &&
 I \ar[dd]^{m}\\&&\\A\ar[rr]_{q}&&I \ar[rr]_{m}&&B}$$\\
where all the squares are pullbacks. We know that the whole square is then a pullback, so that one can assume that $f_1= \phi_1 \circ a$ and $f_2= \pi_2 \circ b$.
The arrow $\phi_2 \circ a = \pi_1 \circ b$ is then an epimorphism, as a composite of epimorphisms (we have used the pullback stability of regular epimorphisms). The fact that $\phi_1 \circ a = f_1$ and  $\pi_2 \circ b = f_2$ implies that 
$$p_1 \circ( \phi_2 \circ a)= q \circ  \phi_1 \circ a  = q \circ f_1 = q \circ f_2 = q \circ \pi_2 \circ b = p_2 \circ \pi_1 \circ b =  p_2 \circ  (\phi _2 \circ a).$$
Since $\phi _2 \circ a$ is an epimorphism, it follows that $p_1 = p_2$, as desired.
 \item To prove the uniqueness of the factorization one can use the fact that any regular epimorphism is a strong epimorphism.
 \end{enumerate}
\end{proof}

\begin{rem}
The uniqueness of the regular image of any arrow $f$ in Theorem \ref{theo1} allows one to call the subobject $m \colon I \rightarrow B$ in diagram \eqref{Factorisation-regular} 
\emph{the image of $f$}.
\end{rem}

\begin{prop}\label{prop1}
Let $\Cc$ be a regular category, then the following properties are satisfied:
\begin{enumerate}
\item regular epimorphisms coincide with strong epimorphisms;
\item if $g\circ f$ is a regular epimorphism, then $g$ is a regular epimorphism;
\item if $g$ and $f$ are regular epimorphisms, then $g\circ f$ is a regular epimorphism;
\item if $f \colon X \rightarrow Y$ and $g \colon X' \rightarrow Y'$ are regular epimorphisms, then the induced arrow $f\times g \colon X \times X' \rightarrow Y \times Y'$ is also a regular epimorphism.
\end{enumerate}
\end{prop}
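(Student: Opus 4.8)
The plan is to establish the four statements in the order listed, since each builds on the previous ones together with Theorem~\ref{theo1}.

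For (1): every regular epimorphism is a strong epimorphism by Proposition~\ref{epis}, so only the converse needs proof. Suppose $f \colon A \rightarrow B$ is a strong epimorphism. Apply Theorem~\ref{theo1} to factor $f = m \circ q$ with $q \colon A \rightarrow I$ a regular epimorphism and $m \colon I \rightarrow B$ a monomorphism. Since $q$ is a strong epimorphism (Proposition~\ref{epis}) and $f = m \circ q$ is a strong epimorphism with $m$ a monomorphism, Exercises~\ref{exo-strong} tells us $m$ is an isomorphism. Hence $f = m \circ q$ is, up to isomorphism, the regular epimorphism $q$; more precisely $f$ is the coequalizer of the same pair $q$ coequalizes, so $f$ is a regular epimorphism.

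For (2) and (3): these now follow formally from (1) together with the corresponding closure properties of strong epimorphisms already recorded in Exercises~\ref{exo-strong}. Indeed, if $g \circ f$ is a regular epimorphism, then by (1) it is a strong epimorphism, hence $g$ is a strong epimorphism by Exercises~\ref{exo-strong}, hence $g$ is a regular epimorphism by (1) again. Similarly, if $f$ and $g$ are regular epimorphisms they are strong epimorphisms, their composite $g \circ f$ is a strong epimorphism by Exercises~\ref{exo-strong}, and therefore a regular epimorphism by (1).

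For (4): write $f \times g$ as the composite $(f \times 1_{Y'}) \circ (1_X \times g)$. It suffices, by (3), to show each factor is a regular epimorphism, i.e.\ that regular epimorphisms are stable under taking a product with an identity arrow. But $1_X \times g \colon X \times X' \rightarrow X \times Y'$ fits into the pullback square whose other edges are the product projections $X \times X' \rightarrow X'$, $X \times Y' \rightarrow Y'$ and $g \colon X' \rightarrow Y'$; since $g$ is a regular epimorphism and regular epimorphisms are pullback stable in a regular category (Definition~\ref{def1}), the arrow $1_X \times g$ is a regular epimorphism. The same argument applied on the other side gives that $f \times 1_{Y'}$ is a regular epimorphism, and composing finishes the proof. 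The only genuinely delicate point is part~(1) — recognising that the factorization of Theorem~\ref{theo1} forces the monomorphism part to be an isomorphism when $f$ is already strong — and once that is in hand the remaining parts are routine reductions to the closure properties of strong epimorphisms and to pullback stability.
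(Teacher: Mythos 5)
Your proof is correct and follows essentially the same route as the paper's: part (1) via the regular epi--mono factorization of Theorem~\ref{theo1} together with the observation that the mono part must be invertible (you cite the last item of Exercices~\ref{exo-strong}, the paper redoes that diagram chase directly), parts (2) and (3) by transferring the closure properties of strong epimorphisms through (1), and part (4) by decomposing $f\times g$ into two factors each obtained as a pullback of a regular epimorphism along a product projection. The only cosmetic difference is that you compose $f\times g = (f\times 1_{Y'})\circ(1_X\times g)$ while the paper uses the other order; both are fine.
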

\begin{proof}
\begin{enumerate}
\item 
One needs to check that any strong epimorphism $f \colon {A \to B}$ is a regular epimorphism.
Consider the factorization $f=m \circ q$ of a strong epimorphism, with $m$ a monomorphism and $q$ a regular epimorphism (Theorem \ref{theo1}). The commutativity of the diagram
$$\xymatrix{A\ar[rr]
          ^{f}\ar[d]
           _{q}&&B\ar@{=}[d]^{}\ar@{.>}[dll]_{d}\\
 I \ar[rr]_{m}&&B}$$
yields a unique arrow $d\colon B \to I$ such that $d \circ  f=q$
and $m \circ d=1_{B}$. This arrow $d$ is the inverse of $m$, and $f$ is then a regular epimorphism.
\item Follows from (1) and the properties of strong epimorphisms (Exercices \ref{exo-strong}).
\item Same argument as for $(2).$
\item If $f\colon X\to Y$ is a regular epimorphism,
consider the commutative diagram
$$\xymatrix{X\times X'\ar[rr]^{f\times 1_{X'}}\ar[d]_{\pi_1}&&Y\times X'\ar[d]^{\pi_1} \\ X\ar@{->>}[rr]_{f}&&Y}$$
 which is easily seen to be a pullback.
The arrow $f\times 1_{X'} $ is then a regular epimorphism and, similarly, one checks that $1_Y\times g $ is a regular epimorphism.
Since $f \times g = (1_Y\times g)\circ  (f\times 1_{X'})$,  this arrow is a regular epimorphism by $(3)$.  
\end{enumerate}
\end{proof}
We are now going to give an equivalent formulation of the notion of regular category:

\begin{theorem}\label{theo2}
Let $\Cc$ be a finitely complete category. Then $\Cc$ is a regular category if and only if
\begin{enumerate}
\item any arrow in $\Cc$ factorizes as a regular epimorphism followed by a monomorphism;
\item these factorizations are \emph{pullback-stable}:  if $m\circ q$ is the factorization of an arrow $p \colon E \rightarrow B$, $f \colon A \rightarrow B$ any arrow, and the squares
$$
\xymatrix{E \times_B A \ar[r]^-{q'} \ar[d]_{\pi_1} & E' \times_B A \ar[r]^-{m'} \ar[d] & A \ar[d]^f \\
E \ar@{->>}[r]_-{q} &{E'\, \, }  \ar@{>->}[r]_{m} & B}
$$
are pullbacks, then $m' \circ q'$ is the factorization of the pullback projection $\pi_2 \colon E \times_B A \rightarrow A$.
\end{enumerate}
\end{theorem}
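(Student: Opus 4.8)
The plan is to prove the two implications separately, leaning on Theorem~\ref{theo1} and on the elementary facts about strong and regular epimorphisms already established.

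For the implication ``regular $\Rightarrow$ (1)\,\&\,(2)'', condition (1) is exactly Theorem~\ref{theo1}(1). For condition (2) I would start from a factorization $p = m\circ q$, with $q$ a regular epimorphism and $m$ a monomorphism, and an arbitrary $f\colon A\to B$, and construct the two squares of the diagram as successive pullbacks: the right-hand one as the pullback of $f$ along $m$, the left-hand one as the pullback of $q$ along the map $E'\times_B A\to E'$. Then $m'$ is a monomorphism since monomorphisms are pullback stable, and $q'$ is a regular epimorphism since $\Cc$ is regular; by the pasting lemma the outer rectangle is a pullback, so $E\times_B A$ is the pullback of $f$ along $p$ and $m'\circ q'=\pi_2$. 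Thus $m'\circ q'$ is a regular-epi/mono factorization of $\pi_2$, and by the uniqueness clause Theorem~\ref{theo1}(2) it is \emph{the} factorization.

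For the converse, assuming (1) and (2), I would check the two clauses of Definition~\ref{def1}. \emph{Coequalizers of kernel pairs exist}: given $f\colon A\to B$, factor $f=m\circ q$ using (1); since $m$ is a monomorphism the kernel pair of $f$ coincides with the kernel pair of $q$, and by Exercice~\ref{pairenoyau} the regular epimorphism $q$ is the coequalizer of its own kernel pair, hence of $(\Eq(f),f_1,f_2)$. \emph{Regular epimorphisms are pullback stable}: here the point is that a regular epimorphism coincides (up to isomorphism) with the regular-epi part of its factorization. Indeed, if $g$ is a regular epimorphism then it is a strong epimorphism (Proposition~\ref{epis}, using that $\Cc$ has binary products); writing $g=m\circ q$ for a factorization given by (1), the composite $m\circ q$ is a strong epimorphism with $m$ a monomorphism, so $m$ is an isomorphism (Exercices~\ref{exo-strong}). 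Now pull $g$ back along an arbitrary arrow and apply (2) with $p=g$: the right-hand square is then the pullback of the isomorphism $m$, so $m'$ is an isomorphism, while (2) asserts that $\pi_2=m'\circ q'$ is the factorization of the pullback projection, whence $q'$ is a regular epimorphism; therefore $\pi_2$ is a regular epimorphism. This yields both clauses, so $\Cc$ is regular.

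I expect the only genuinely delicate step to be this last bridge: hypothesis (2) asserts the stability of \emph{factorizations}, not directly of regular epimorphisms, and what converts the former into the latter is precisely the observation that for a regular (equivalently strong) epimorphism the monomorphism appearing in its factorization is invertible, so that (2) specializes to the pullback stability of regular epimorphisms. Everything else is routine pullback-pasting combined with results already proved in the text.
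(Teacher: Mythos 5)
Your proof is correct and follows essentially the same route as the paper: the forward direction is Theorem~\ref{theo1} plus pullback-pasting, and the converse reduces coequalizers of kernel pairs to Exercice~\ref{pairenoyau} via the factorization of $f$, exactly as in the text. The one step the paper dismisses as ``clear'' --- that (2) yields pullback stability of regular epimorphisms --- is the step you rightly flag and fill in, by noting that a regular epimorphism is strong, so the monomorphism in its factorization is invertible (Exercices~\ref{exo-strong}) and (2) then specializes to the required stability.
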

\begin{proof} When $\mathbb C$ is regular, $(1)$ and $(2)$ follow from Theorem \ref{theo1}.\\ For the converse, it is clear that $(2)$ implies that regular epimorphisms are pullback stable. It remains to show that any kernel pair \begin{equation}\label{kernel-p} \xymatrix{\Eq(f) \ar@<4pt>[r]^{f_1}\ar@<-4pt>[r]_{f_2}&X}\end{equation}
 of an arrow $f \colon X\rightarrow Y$ has a coequalizer. For this consider the regular epi-mono factorization $m\circ q$ of $f$ (which exists by $(1)$), and observe that \eqref{kernel-p}
  is also the kernel pair of the regular epimorphism $q$, since $m$ is a monomorphism. The arrow $q$ is then the coequalizer of its kernel pair \eqref{kernel-p}  (see Exercise \ref{pairenoyau}).
\end{proof}

The following result will be useful to prove the so-called Barr-Kock Theorem:
\begin{lemma}\label{lefttoright}
Consider a commutative diagram
$$
\xymatrix{A\ar[r]^k \ar[d]_a &B \ar[d]^b\ar[r]^l & C \ar[d]_{c}  \\
A' \ar[r]_{k'} &B' \ar[r]_{l'} & C'}
$$
in a regular category $\mathbb C$, 
where the left-hand square and the external rectangle are pullbacks. If $k'$ is a regular epimorphism, then the right-hand square is a pullback.
\end{lemma}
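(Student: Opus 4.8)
The plan is to build the canonical comparison arrow $\theta$ from $B$ into the pullback of $l'$ and $c$, and to prove it is an isomorphism by showing that it is at the same time a regular epimorphism and a monomorphism; the characterisation of isomorphisms as monomorphic strong epimorphisms, together with Proposition~\ref{prop1}(1), then closes the argument. Concretely, I would first form the pullback $P = B' \times_{C'} C$ of $l'$ along $c$, with projections $\pi \colon P \to B'$ and $\rho \colon P \to C$, and let $\theta \colon B \to P$ be the unique arrow with $\pi \circ \theta = b$ and $\rho \circ \theta = l$, produced by the commutativity of the right-hand square. Since that square factors as $\theta$ followed by the defining pullback square of $P$, it is a pullback exactly when $\theta$ is an isomorphism, and this becomes the goal.

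Next I would record two pullback squares lying over $k'$. From the left-hand square being a pullback of the regular epimorphism $k'$ one gets that $k \colon A \to B$ is a regular epimorphism (pullback stability). Pasting the defining pullback square of $P$ on top of $k'$ produces a pullback square with apex $A' \times_{B'} P$ over $k'$; by the pasting law this apex is the pullback of $l'k'$ along $c$, which is also what the external rectangle says about $A$, so $A \cong A' \times_{B'} P$, and checking the universal property of $P$ identifies the ensuing projection $A \to P$ with $\theta \circ k$. Thus the square with edges $a$, $\theta \circ k$, $k'$, $\pi$ is a pullback, so $\theta \circ k$ is again a regular epimorphism. Since $\theta \circ k$ is a (regular, hence strong) epimorphism, Exercices~\ref{exo-strong} and Proposition~\ref{prop1}(1) give that $\theta$ is a regular epimorphism.

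It remains to show that $\theta$ is a monomorphism, which is the delicate part. Using the two pullback squares above, I would compute the kernel pairs of the regular epimorphisms $k$ and $\theta \circ k$ out of $A$, obtaining $\Eq(k) \cong B \times_{B'} \Eq(k')$ and $\Eq(\theta \circ k) \cong P \times_{B'} \Eq(k')$, and observe that under these identifications the canonical inclusion $\Eq(k) \hookrightarrow \Eq(\theta \circ k)$ becomes the arrow $\theta \times_{B'} \Eq(k')$, which, using $\pi \circ \theta = b$, is a pullback of $\theta$ along the projection $P \times_{B'} \Eq(k') \to P$. Since $\theta$ is a regular epimorphism and regular epimorphisms are pullback stable, this inclusion is a regular epimorphism; being also a monomorphism, it is an isomorphism, so $\Eq(k) = \Eq(\theta \circ k)$. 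By Exercise~\ref{pairenoyau} both $k$ and $\theta \circ k$ are then the coequalizer of this common kernel pair, so there is an isomorphism $\xi \colon B \to P$ with $\xi \circ k = \theta \circ k$; since $k$ is an epimorphism, $\xi = \theta$, whence $\theta$ is an isomorphism and the right-hand square is a pullback.

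The genuine obstacle is precisely the monomorphism step: it cannot be read off the universal property of $P$, since that is already the conclusion. The device that makes it work is to transport the question to the inclusion of kernel pairs $\Eq(k) \hookrightarrow \Eq(\theta \circ k)$, where it appears as a pullback of the regular epimorphism $\theta$ and is therefore forced to be invertible; the routine (but essential) input is the coordinate computation identifying that inclusion with $\theta \times_{B'} \Eq(k')$, while everything else is pullback stability and the closure properties of regular $=$ strong epimorphisms already established.
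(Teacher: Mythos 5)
Your proof is correct, and its first half coincides with the paper's: both arguments form the comparison arrow into $B'\times_{C'}C$ (your $\theta$, the paper's $\beta$), use the outer pullback rectangle to identify $A$ with the pullback of $k'$ along the projection $B'\times_{C'}C\to B'$ (the paper phrases this as the induced arrow $\alpha$ being an isomorphism), and conclude that $\theta\circ k$, being a pullback of $k'$, is a regular epimorphism, whence so is $\theta$. The monomorphism step is where you take a genuinely different route. The paper observes that the square with sides $k$, $\alpha$, $\beta$ and the projection $A'\times_{C'}C\to B'\times_{C'}C$ is a pullback, so the induced arrow $\Eq(\alpha)\to\Eq(\beta)$ is a pullback of the regular epimorphism $k$, hence an epimorphism; since $\alpha$ is monic, $\Eq(\alpha)$ has equal projections, and this equality is transported along that epimorphism to $\Eq(\beta)$, making $\beta$ monic. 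You instead pull back $\theta$ itself to show that the inclusion $\Eq(k)\hookrightarrow\Eq(\theta\circ k)$ is a regular epimorphism, hence an isomorphism, and finish via uniqueness of coequalizers of the common kernel pair (Exercise~\ref{pairenoyau}), never explicitly exhibiting $\theta$ as a monomorphism. Both routes are sound. The paper's pulls back along $k$ and so does not need to know beforehand that $\beta$ is a regular epimorphism, and it avoids the identifications $\Eq(k)\cong\Eq(k')\times_{B'}B$ and $\Eq(\theta\circ k)\cong\Eq(k')\times_{B'}P$ that your argument requires; those identifications are the standard compatibility of kernel pairs with pullbacks and do check out, but they are the only place where your write-up leaves real verification to the reader. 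Your version buys a slightly cleaner conclusion, trading the ``equal projections'' transport for the coequalizer comparison.
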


\begin{proof}
Consider the commutative diagram
\[
\xymatrix{A \ar[rr]^{k}\ar[dd]_{a}\ar@{.>}[dr]^{\alpha} && B \ar[dd]_(.3){b}|(.5){\hole}
\ar[rr]^{l}\ar@{.>}[dr]^{\beta}&&C \ar[dd]_(.3){c}|(.5){\hole} \ar@{=}[dr] \\
& A' \times_{C'}{C} \ar[dl]^-{\pi_1}\ar[rr]_(.35){\pi_2}&&B' \times_{C'}{C} \ar[rr]_(.35){\pi_{2}'}\ar[dl]^-{\pi_{1}'}&&C \ar[dl]^{c}\\ 
A' \ar[rr]_{k'} && B' \ar[rr]_{l'}&&C'}
\]
where $(B' \times_{C'}{C}, \pi_1',\pi_2')$ is the pullback of $l'$ and $c$, and $(A' \times_{C'}{C}, \pi_1, \pi_2)$ is the pullback of $k'$ and $\pi_1'$, with $\alpha$ and $\beta$ the naturally induced arrows. The fact that the external rectangle is a pullback implies that the arrow $\alpha$ is an isomorphism. The arrow $\pi_2$ is a regular epimorphism (because $k'$ is one), so that $\pi_2 \circ \alpha= \beta \circ k$ is a regular epimorphism, and then $\beta$ is a regular epimorphism (see Proposition~\ref{prop1} (2)). The arrow $\beta$ is a monomorphism: this follows from the fact that the square
$$
\xymatrix{A \ar[d]_{\alpha} \ar[r]^k & B \ar[d]^{\beta} \\
A' \times_{C'}{C} \ar[r]_{\pi_2} & B' \times_{C'}{C}
}
$$
is a pullback, so that both the induced commutative squares 
$$
\xymatrix{\Eq(\alpha) \ar@<-.5ex>[d]_{p_1} \ar@<.5ex>[d]^{p_2} \ar@{.>}[r] & \Eq(\beta) \ar@<-.5ex>[d]_{p_1} \ar@<.5ex>[d]^{p_2} \\
A \ar[r]_{k} & B
}
$$
 are pullbacks, where the (unique) dotted arrow making them commute is then a (regular) epimorphism. The arrows $p_1 \colon \Eq(\alpha) \rightarrow A$ and $p_2 \colon \Eq(\alpha) \rightarrow A$ are equal (since $\alpha$ is an monomorphism), so that the projections $p_1 \colon \Eq(\beta) \to B$ and $p_2 \colon \Eq(\beta) \to B$ are also equal, and then $\beta$ is indeed a monomorphism.
\end{proof}

We are now ready to prove the following interesting result, often referred to as the \emph{Barr-Kock Theorem} \cite{Barr}, although it was first observed by A. Grothendieck \cite{Gro} in a different context (see also \cite{BG}): 
\begin{theorem} \label{BK}
Let $\Cc$ be a regular category, and 
$$\xymatrix{\Eq(f) \ar@<4pt>[rr]^{p_1}\ar[dd]_{v}\ar@<-4pt>[rr]_{p_2}&&A\ar[dd]^{u}
\ar@{->>}[rr]^{f} &&X\ar[dd]_{w}\\&(1)&&(2)\\ \Eq(g) \ar@<4pt>[rr]^{p_1}
\ar@<-4pt>[rr]_{p_2}&&B\ar[rr]_{g} &&Y}$$\\
a commutative diagram with $f$ a regular epimorphism. If either of the left-hand commutative squares are pullbacks, then the right-hand square $(2)$ is a pullback.
\end{theorem}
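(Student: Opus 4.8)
The plan is to show that the canonical comparison morphism $k = \langle u, f \rangle \colon A \to B \times_Y X$ induced by the commutativity of the square $(2)$ is an isomorphism. Since a morphism that is both a monomorphism and a strong epimorphism is invertible, and regular epimorphisms are strong epimorphisms (Proposition \ref{prop1}\,(1)), it suffices to prove that $k$ is both a regular epimorphism and a monomorphism. I would first note that the two hypotheses coincide: the twist automorphisms of $\Eq(f)$ and of $\Eq(g)$ are compatible with $v$, so the square $(1)$ built on $p_1$ is a pullback precisely when the one built on $p_2$ is. Hence we may assume that
$$\xymatrix{\Eq(f) \ar[r]^{p_1}\ar[d]_v & A \ar[d]^{u} \\ \Eq(g) \ar[r]_{p_1} & B}$$
is a pullback.

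For the regular epimorphism part, I would paste this pullback on top of the kernel pair square of $g$:
$$\xymatrix{\Eq(f) \ar[r]^{p_1}\ar[d]_{v} & A \ar[d]^{u} \\ \Eq(g) \ar[r]^{p_1} \ar[d]_{p_2} & B \ar[d]^{g} \\ B \ar[r]_{g} & Y}$$
Both inner squares are pullbacks, hence so is the outer rectangle; and since $g \circ u = w \circ f$ this realizes $\Eq(f)$ as the pullback of $g \colon B \to Y$ along $w \circ f \colon A \to Y$, equivalently as the pullback of $f \colon A \to X$ along the projection $\sigma \colon B \times_Y X \to X$. By pullback-stability of regular epimorphisms the corresponding projection $e \colon \Eq(f) \to B \times_Y X$ is therefore a regular epimorphism, and one checks that $e = k \circ p_2$ (the two morphisms have equal composites with each projection of $B \times_Y X$, namely $u \circ p_2$ and $f \circ p_1 = f \circ p_2$). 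As $k \circ p_2$ is a regular epimorphism, Proposition \ref{prop1}\,(2) forces $k$ to be a regular epimorphism.

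For the monomorphism part, I would check that the two projections $\kappa_1, \kappa_2 \colon \Eq(k) \to A$ of the kernel pair of $k$ agree. Since $k = \langle u, f\rangle$ one has $u \circ \kappa_1 = u \circ \kappa_2$ and $f \circ \kappa_1 = f \circ \kappa_2$, and the latter equality produces a unique $h \colon \Eq(k) \to \Eq(f)$ with $p_i \circ h = \kappa_i$. Composing with $v$ and using $u \circ \kappa_1 = u \circ \kappa_2$ shows that $v \circ h$ has both its composites with $p_1, p_2 \colon \Eq(g) \to B$ equal to $u \circ \kappa_1$, so that $v \circ h = \Delta_B \circ u \circ \kappa_1$ where $\Delta_B \colon B \to \Eq(g)$ is the diagonal; since moreover $v \circ \Delta_A = \Delta_B \circ u$ for the diagonal $\Delta_A \colon A \to \Eq(f)$, the universal property of the displayed pullback forces $h = \Delta_A \circ \kappa_1$, whence $\kappa_2 = p_2 \circ h = \kappa_1$. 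Thus $k$ is a monomorphism; combined with the previous paragraph, $k$ is an isomorphism, and $(2)$ is a pullback.

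The step I expect to carry the most weight is the regular-epimorphism part — more precisely, the observation that the pullback obtained by stacking square $(1)$ on the kernel pair of $g$ can be re-read as a pullback of the regular epimorphism $f$, so that Proposition \ref{prop1}\,(2) applies. The monomorphism part is a more routine, though slightly delicate, exercise with universal properties, and is where the pullback hypothesis genuinely enters. One might also hope to apply Lemma \ref{lefttoright} to the rectangle $\Eq(f) \to A \to X$ lying over $\Eq(g) \to B \to Y$, which reduces the theorem to showing that the outer rectangle $\Eq(f) \to X$, $\Eq(f) \to \Eq(g) \to Y$ is a pullback; but this last statement is in fact equivalent to the conclusion, so a ``regular epimorphism and monomorphism'' analysis of $k$ cannot really be avoided.
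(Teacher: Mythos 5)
Your proof is correct, and its key geometric step --- pasting square $(1)$ on top of the kernel pair square of $g$ so as to exhibit $\Eq(f)$, with the arrows $p_1\colon \Eq(f)\to A$ and $u\circ p_2\colon \Eq(f)\to B$, as the pullback of $g$ along $w\circ f$ --- is exactly the step the paper uses. Where you diverge is in how you finish. The paper reads that same pullback as the outer rectangle of the diagram with rows $\Eq(f)\xrightarrow{p_2} A\xrightarrow{u} B$ and $A\xrightarrow{f} X\xrightarrow{w} Y$ and vertical arrows $p_1$, $f$, $g$: its left square is the kernel pair square of $f$ (automatically a pullback) with the regular epimorphism $f$ along the bottom, so Lemma \ref{lefttoright} applies at once and its right square, which is square $(2)$, is a pullback. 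You instead analyse the comparison map $k=\langle u,f\rangle\colon A\to B\times_Y X$ by hand, proving it is a regular epimorphism (via $e=k\circ p_2$ and Proposition \ref{prop1} (2)) and a monomorphism (via the diagonals of the two kernel pairs and the pullback property of $(1)$); both halves are sound, and your monomorphism argument is in fact a little cleaner than the one buried inside the proof of Lemma \ref{lefttoright}. The net effect is that you have re-proved the content of that lemma in the special case at hand, which makes your argument self-contained but longer. Your closing remark that an epi-plus-mono analysis of $k$ ``cannot really be avoided'' is the one place you miss a trick: you only tried Lemma \ref{lefttoright} on the rectangle with $(1)$ on the left and $(2)$ on the right, whereas the profitable choice is the transposed rectangle just described, whose outer pullback is precisely the one you had already constructed. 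Finally, your observation that the $p_1$- and $p_2$-versions of square $(1)$ are interchangeable via the twist isomorphisms of the two kernel pairs is correct, and the paper relies on the same symmetry implicitly.
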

\begin{proof}
Consider the following commutative diagram
$$\xymatrix{\Eq(f) \ar[rr]^{p_2} \ar[dd]_{v}\ar[dr]_{p_1}&&A \ar[dd]^(.7){u}\ar[dr]^{f}\\& A\ar[rr]_(.7){f}
\ar[dd]^(.7){u}&&X \ar[dd]^{w} \\ \Eq(g) \ar[rr]^(.7){p_2}\ar[dr]_{p_1}&&B\ar[dr]_{g}\\&B \ar[rr]_{g}&&Y}$$
The assumptions guarantee that the left-hand face and the bottom face of the cube are pullbacks. By commutativity it follows that the rectangle
$$
\xymatrix{ \Eq(f) \ar[r]^{p_2} \ar[d]_{p_1} &A \ar[d]^f \ar[r]^u & B \ar[d]^{g}  \\
A \ar@{->>}[r]_{f} &X  \ar[r]_{w} & Y}
$$
is also a pullback, as well as its left-hand square. Since $f$ is a regular epimorphism, by Lemma \ref{lefttoright} it follows that the right-hand square is a pullback. 
\end{proof}

\section{Relations in regular categories}
\begin{defi}\label{def8} A \emph{relation 
from  $X$ to $Y$} in a category $\Cc$ is a graph  $$ \xymatrix{ & R \ar[rd]^{r_{2}} \ar[dl]_{r_{1}} & \\ 
X& &Y }$$
such that the pair $(r_{1},r_{2})$ is jointly monomorphic. When the product $X \times Y$ exists, this is equivalent to the fact that the factorization $(r_{1},r_{2}):R\rightarrow X\times Y$ is a monomorphism.
\end{defi}
 As usual, we identify two relations $R \rightarrow X\times Y$ and $S \rightarrow X\times Y$ when they determine the same \emph{subobject} of $X \times Y$, i.e. the same equivalence class of monomorphisms with codomain $X \times Y$. If $X=Y$, one says that $R$ is a relation on $X$.

\begin{enumerate}
\item A relation $R$ on $X$ is \emph{reflexive} when there is an arrow $\delta :X\rightarrow R$
such that $r_{1}\circ \delta=1_{X}=r_{2}\circ \delta$.
\item $R$ is \emph{symmetric} if there is an arrow $\sigma :R\rightarrow R$ such that $r_{1}\circ \sigma=r_{2}$
and $r_{2}\circ \sigma=r_{1}$.
\item Consider the pullback
$$\xymatrix{R\times_{X} R\ar[r]^-{p_{2}}\ar[d]_{p_{1}}&R\ar[d]^{r_{1}}\\
R\ar[r]_{r_{2}}&X.}$$
\noindent The relation $R$ is \emph{transitive} if there is an arrow $\tau : R\times_{X} R\rightarrow R$ such that $r_{1}\circ \tau=r_{1}\circ p_{1}$
et $r_{2}\circ \tau=r_{2}\circ p_{2}$.
\end{enumerate}
A relation $R$ on $X$ is an \emph{equivalence relation} if $R$ is reflexive, symmetric and transitive.
Of course, this abstract notion of equivalence relation gives in particular the usual one when $\mathbb C$ is the category of sets. 

When $\mathbb C = \mathsf{Grp}$ is the category of groups, an equivalence relation $R\subset X \times X$ in the category $\Grp$ is an equivalence relation on the underlying set of $X$ which is also a subgroup of the group $X \times X$. In universal algebra, an internal equivalence relation in a variety is called a \emph{congruence}.

\begin{lemma}\label{lem8} In a category with pullbacks the kernel pair $\xymatrix{\Eq(f) \ar@<4pt>[r]^-{p_{1}}\ar@<-4pt>[r]_-{p_{2}}&X}$
of an arrow $f:X\rightarrow Y$ is an equivalence relation on $X$ in $\Cc$.
\end{lemma}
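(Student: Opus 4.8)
The plan is to check directly, using only the universal property of the pullback square \eqref{kernel-pair} that defines $\Eq(f)$, that the pair $(p_1,p_2)$ is a relation on $X$ and that it is reflexive, symmetric, and transitive.

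First, $(p_1,p_2)$ is jointly monomorphic: if $g,h \colon Z \to \Eq(f)$ satisfy $p_1 \ro g = p_1 \ro h$ and $p_2 \ro g = p_2 \ro h$, then $g = h$ by the uniqueness part of the universal property of \eqref{kernel-pair}. For reflexivity, since $f \ro 1_X = f \ro 1_X$, the universal property produces a unique $\delta \colon X \to \Eq(f)$ with $p_1 \ro \delta = 1_X = p_2 \ro \delta$. For symmetry, the equality $f \ro p_2 = f \ro p_1$, which holds because the square \eqref{kernel-pair} commutes, lets the pair $(p_2, p_1)$ induce a unique arrow $\sigma \colon \Eq(f) \to \Eq(f)$ with $p_1 \ro \sigma = p_2$ and $p_2 \ro \sigma = p_1$.

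The only step involving a short computation is transitivity. I would form the pullback of $p_1$ along $p_2$, with projections $q_1, q_2 \colon \Eq(f) \times_X \Eq(f) \to \Eq(f)$ satisfying $p_2 \ro q_1 = p_1 \ro q_2$, and then exhibit the pair of arrows $p_1 \ro q_1$ and $p_2 \ro q_2$ from $\Eq(f) \times_X \Eq(f)$ to $X$ as being equalized by $f$:
$$ f \ro p_1 \ro q_1 = f \ro p_2 \ro q_1 = f \ro p_1 \ro q_2 = f \ro p_2 \ro q_2, $$
where the first and last equalities use $f \ro p_1 = f \ro p_2$ and the middle one uses $p_2 \ro q_1 = p_1 \ro q_2$. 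The universal property of \eqref{kernel-pair} then yields a unique $\tau \colon \Eq(f) \times_X \Eq(f) \to \Eq(f)$ with $p_1 \ro \tau = p_1 \ro q_1$ and $p_2 \ro \tau = p_2 \ro q_2$, which is precisely the transitivity condition.

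Since everything follows from repeated use of the universal property of a single pullback, there is no genuine obstacle here; the only point that requires a little care is keeping the two projections of the iterated pullback $\Eq(f) \times_X \Eq(f)$ straight and chaining the three equalities in the transitivity step in the right order.
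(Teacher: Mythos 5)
Your proposal is correct and follows essentially the same route as the paper: each of reflexivity, symmetry, and transitivity is obtained by applying the universal property of the pullback defining $\Eq(f)$ to a suitable pair of arrows equalized by $f$, and joint monomorphicity comes from the uniqueness clause of that same universal property. Your explicit three-step chain of equalities in the transitivity step is exactly the verification the paper leaves implicit in its cube diagram.
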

\begin{proof}
The arrows $p_1 \colon \Eq(f) \rightarrow X$ and $p_2 \colon \Eq(f) \rightarrow X$ are jointly monomorphic, since they are projections of a pullback.
The universal property of the kernel pair $(\Eq(f), p_1,p_2)$ implies that there is a unique $\delta \colon X \rightarrow \Eq(f)$ such that $p_1 \circ \delta = 1_X= p_2 \circ \delta$
 \[\xymatrix{ X \ar@/_/[ddr]_{1_X  } \ar@/^/[drr]^{1_X } \ar@{.>}[dr]|-{\delta  }\\
& \Eq(f) \ar[d]_{p_1} \ar[r]^{p_2} & X \ar[d]^f \\ &X \ar[r]_f &Y,
}\]
and $\Eq(f)$ is then reflexive. Similarly, the commutativity of the external part of the diagram \[\xymatrix{ \Eq(f) \ar@/_/[ddr]_{p_2  } \ar@/^/[drr]^{p_1 } \ar@{.>}[dr]|-{\sigma  }\\
& \Eq(f) \ar[d]_{p_1} \ar[r]^{p_2} & X \ar[d]^f \\ &X \ar[r]_f &Y
}\]
implies that there is a unique arrow $\sigma \colon \Eq(f) \rightarrow  \Eq(f)$ such that $p_1 \circ \sigma = p_2$ and $p_2 \circ \sigma = p_1$, hence $ \Eq(f)$ is symmetric. For the transitivity of $ \Eq(f)$ one considers the following commutative diagram
$$\xymatrix{ \Eq(f) \times_X  \Eq(f) \ar[rr]^{\pi_2} \ar[dd]_{\pi_1 }\ar@{.>}[dr]_{\tau}&& \Eq(f)  \ar[dd]^(.8){p_1}\ar[dr]^{p_2}\\&  \Eq(f) \ar[rr]_(.7){p_2}
\ar[dd]^(.7){p_1}&&X \ar[dd]^{f} \\ \Eq(f) \ar[rr]^(.8){p_2}\ar[dr]_{p_1}&&X \ar[dr]_{f}\\&X \ar[rr]_{f}&&Y}$$
where the back face is a pullback. The universal property of the kernel pair $( \Eq(f), p_1, p_2)$ shows that there is a unique $\tau$ such that $p_1 \circ \tau =p_1 \circ \pi_1$ and $p_2 \circ \tau = p_2 \circ \pi_2$.
\end{proof}

An important aspect of regular categories is that in these categories one can define a composition of relations,  which has some nice properties. \\ 

In the category $\Ens$, if $R \rightarrow X \times Y$ is a relation from $X$ to $Y$ and $S  \rightarrow Y \times Z$ a relation from $Y$ to $Z$, one usually defines the relation $ S \circ R \rightarrow X \times Z$ by setting
$$S \circ R = \{  (x,z) \in X \times Z \,  {\rm such \, that} \,  \exists \, y \in Y \,  {\rm with} \, xRy, ySz \}.$$
This construction is also possible in any regular category $ \mathbb C$, thanks to the existence of regular images (Theorem \ref{theo1}).
One first builds the pullback
$$ \xymatrix{ & & R \times_Y S   \ar[rd]_{\pi_{2}} \ar[dl]^{\pi_{1}} & & \\ 
& R\ar[dl]^{r_1} \ar[dr]_{r_2}& &S \ar[dl]^{s_1} \ar[dr]_{s_2}& \\
X & & Y & &  Z}$$
and one then factorizes the arrow $(r_1 \circ \pi_1, r_2 \circ \pi_2) \colon R \times_Y S \rightarrow X \times Z$ as a regular epimorphism
  $q \colon R \times_Y S  \rightarrow I$  followed by a monomorphism $i \colon I \rightarrow X \times Z$:
$$ \xymatrix{
 R \times_Y S  \ar[r]^-q & I \ar[r]^-i & X \times Z
 }$$
 In $\Ens$, the set $I$ consists of the element $(x,z) \in X \times Z$ such that there is a $(u,y,v) \in R \times_Y S$ with $u=x$ and $v=z$: this is precisely $S \circ R$.
 
 This composition is actually associative:
 \begin{theorem}\label{associative}
Let $\mathbb C$ be a regular category. If $R \rightarrow A \times B$, $S \rightarrow B \times C$ and $T  \rightarrow C \times D$ are relations in $\mathbb C$, one has the equality 
  $$T \circ (S\circ R)= (T \circ S) \circ R.$$
 \end{theorem}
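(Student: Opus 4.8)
The plan is to exhibit both $T\circ(S\circ R)$ and $(T\circ S)\circ R$ as \emph{the image} (in the sense of Theorem~\ref{theo1}) of one and the same arrow out of the triple pullback $W = R\times_B S\times_C T$, which exists since $\Cc$ is finitely complete. The ingredients needed, besides the very construction of composition of relations, are: pullback-stability of regular epimorphisms (Definition~\ref{def1}); their closure under composition (Proposition~\ref{prop1}(3)); uniqueness of the regular epi--mono factorization (Theorem~\ref{theo1}); and the (easy direction of the) usual pasting lemma for pullbacks, together with the canonical associativity isomorphism $(R\times_B S)\times_C T\cong R\times_B(S\times_C T)$.

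First I would fix notation. Writing $P = R\times_B S$ for the pullback of $r_2$ and $s_1$, with projections $\pi_1,\pi_2$, recall that $S\circ R$ is defined by the factorization $(r_1\pi_1,\,s_2\pi_2) = i\circ q$, where $q\colon P\to S\circ R$ is a regular epimorphism and $i=((S\circ R)_1,(S\circ R)_2)\colon S\circ R\to A\times C$ a monomorphism; in particular $(S\circ R)_1\circ q=r_1\pi_1$ and $(S\circ R)_2\circ q=s_2\pi_2$. Now form $W=P\times_C T$, the pullback of $s_2\pi_2\colon P\to C$ along $t_1\colon T\to C$. Since $s_2\pi_2=(S\circ R)_2\circ q$, the pasting lemma applied to
$$\xymatrix{W \ar[r]\ar[d] & (S\circ R)\times_C T \ar[r]\ar[d] & T \ar[d]^{t_1} \\ P \ar@{->>}[r]_-{q} & S\circ R \ar[r]_-{(S\circ R)_2} & C}$$
shows that the left-hand square is a pullback, so the arrow $W\to (S\circ R)\times_C T$ is a regular epimorphism, being a pullback of $q$. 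Composing it with the regular-epi part $q'\colon (S\circ R)\times_C T\to T\circ(S\circ R)$ of the factorization defining $T\circ(S\circ R)$ gives, by Proposition~\ref{prop1}(3), a regular epimorphism $W\to T\circ(S\circ R)$ followed by the monomorphism $T\circ(S\circ R)\to A\times D$; a short diagram chase identifies the composite $W\to A\times D$ with the canonical arrow $w$ having $r_1$ as its $R$-component and $t_2$ as its $T$-component. By uniqueness of factorizations (Theorem~\ref{theo1}), $T\circ(S\circ R)$ is the image of $w$.

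The symmetric computation — first factorizing the defining arrow of $T\circ S$ out of $S\times_C T$, then forming the pullback $R\times_B(S\times_C T)$ and its regular epi--mono factorization into $A\times D$ — produces in exactly the same way $(T\circ S)\circ R$ as the image of the canonical arrow $R\times_B(S\times_C T)\to A\times D$ with the same component description ($r_1$ on the $R$-coordinate, $t_2$ on the $T$-coordinate). Under the canonical isomorphism $(R\times_B S)\times_C T\cong R\times_B(S\times_C T)$ these two arrows correspond, so both composites are the image of a single arrow $W\to A\times D$ and therefore coincide as subobjects of $A\times D$.

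I expect the only real obstacle to be the pullback-stability step: one must check carefully that the square displayed above genuinely is a pullback — via the pasting lemma applied to the two pullback squares whose composite exhibits $W$ — since this is precisely what licenses applying Proposition~\ref{prop1}(3) to the composite $W\to T\circ(S\circ R)$. Once that is in place, matching the two arrows $W\to A\times D$ and closing up the argument is routine diagram-chasing with universal properties.
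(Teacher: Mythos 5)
Your proposal is correct and follows essentially the same route as the paper: both composites are identified with the image of the single canonical arrow from the triple pullback $R\times_B S\times_C T$ to $A\times D$, with pullback stability of regular epimorphisms and uniqueness of the regular epi--mono factorization doing the work. You have in fact supplied exactly the verification (via the pasting lemma and Proposition~\ref{prop1}(3)) that the paper leaves to the reader.
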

\begin{proof}
Consider the diagram obtained by building the following pullbacks:
$$
\xymatrix{ & & & X \ar[dr]^{x_2} \ar[dl]_{x_1}  & & & \\
& &R \times _B S \ar[dr]^{p_2} \ar[dl]_{p_1} & & S \times _C T  \ar[dr]^{q_2} \ar[dl]_{q_1}& & \\
&R \ar[dr]^{r_2} \ar[dl]_{r_1} & &  S\ar[dr]^{s_2} \ar[dl]_{s_1}& & T \ar[dr]^{t_2} \ar[dl]_{t_1} & \\
A & & B & & C & &D. }
$$
The proof consists in showing that the relations $T \circ (S\circ R)$ and $(T \circ S)\circ R$ are both given by the regular image $i \colon I \rightarrow A \times D$ in the factorization 
$$\xymatrix{X \ar[rrrr]^-{(r_1 \circ p_1 \circ x_1, t_2 \circ q_2 \circ x_2 )}  \ar@{->>}[drr]_q& & & & A \times D \\
& & I \ar[urr]_i &  & }$$
as a regular epimorphism followed by a monomorphism of the arrow $$(r_1 \circ p_1 \circ x_1, t_2 \circ q_2 \circ x_2 ) \colon X \rightarrow A \times D.$$ We leave it to the reader the verification of this fact, which uses the pullback stability of regular epimorphisms in a crucial way. 
\end{proof}

This result allows one to define a new category starting from any regular category $\mathbb C$, the category $\mathsf{Rel} (\mathbb C)$ of relations in $\mathbb C$. The objects are the same as the ones in $\mathbb C$, an arrow from $X$ to $Y$ is simply a relation from $X$ to $Y$, and composition is the relational one defined above.
For any relation $R$ from $X$ to $Y$ the discrete relation  (also called the equality relation) on $X$ $$\Delta_X : \xymatrix{X \ar@<2pt>[r]^{1_X} \ar@<-2pt>[r]_{1_X} & X}$$
 is such that $R \circ \Delta_X=R$, and for any relation $S$ from $Z$ to $X$ one has $\Delta_X \circ S=S$. It follows that the arrow $\Delta_X$ in $\mathsf{Rel}(\mathbb C)$ is the identity on the object $X$ for the composition in $\mathsf{Rel} (\mathbb C)$.
 
There is a faithful functor $ \Gamma \colon \mathbb C \rightarrow \mathsf{Rel} (\mathbb C)$, where $\Gamma(f)$ is the 
 the \emph{graph} of $f \colon X \rightarrow Y$, seen as a relation:
$$
\xymatrix{&X \ar[dl]_{1_X} \ar[dr]^{f} & \\
X& & Y.
}
$$
From now on we shall write $1_X$ for the discrete relation on $X$, which can also be seen as the relation $\Gamma(1_X)$.
\begin{rem}
$\mathsf{Rel} (\mathbb C)$ is not only a category, but a (locally ordered) $2$-category. Indeed, there is a natural partial ordering on its arrows, since the relations from $X$ to $Y$ are the subobjects of a fixed object $X \times Y$ of $\mathbb C$. This order is also compatible with the composition: if $R\le S$, then $R \circ T \le S \circ T$ whenever these composites are defined. This is the main argument to show that $\mathsf{Rel} (\mathbb C)$ is a $2$-category, which is actually locally-ordered: between any two arrows (or $1$-cells) there is at most one $2$-cell, and the only invertible $2$-cells are the identities (see \cite{Johnstone-elephant} for more details).
\end{rem}
\section{Calculus of relations and Mal'tsev categories}
In this section we shall always assume that the category $\Cc$ is regular.\\
Given a relation $R= (R, r_1, r_2)$
$$
\xymatrix{&R \ar[dl]_{r_1} \ar[dr]^{r_2} & \\
X& & Y
}
$$
 from $X$ to $Y$, we write $R^o= (R, r_2, r_1)$ for the \emph{opposite relation} from $Y$ to $X$:
$$
\xymatrix{&R \ar[dl]_{r_2} \ar[dr]^{r_1} & \\
Y& & X.
}
$$
Of course ${(R^o)}^o= R$. It is easy to see that a relation $R$ is \emph{symmetric} if and only if $R=R^o$.
On the other hand, a relation $R$ is \emph{transitive} when $R \circ R \le R.$
Moreover, in a regular category, any relation $(R,r_1,r_2)$ can be seen as the composite $R= r_2 \circ r_1^o$. By definition of the composition of relations, the relation $(X \times_Y Z, p_1, p_2)$ in a pullback 
$$\xymatrix{X \times_Y Z \ar[r]^-{p_2}\ar[d]_{p_1}&Z \ar[d]^{g}\\
X\ar[r]_{f}&Y}$$
 can be written as $g^o \circ f$.
We leave the verification of the following properties to the reader:
\begin{lemma}\label{simple-props}
In a regular category $\Cc$:
\begin{enumerate}
\item any kernel pair $(\Eq (f), f_1, f_2)$ of an arrow $f \colon A \rightarrow B$ can be written as $f^o \circ f$;
\item $f \colon A \rightarrow B$  is a regular epimorphism if and only if $f \circ f^o= 1_B$;
\item $f \colon A \rightarrow B$  is a monomorphism if and only if $f^o \circ f = 1_A$.
\end{enumerate}
\end{lemma}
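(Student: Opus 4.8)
The plan is to deduce all three statements directly from the definition of the relational composite, using repeatedly that the projections of a pullback are jointly monomorphic (so that the induced arrow into the product is already a monomorphism) and that the regular-epimorphism/monomorphism factorization is unique (Theorem \ref{theo1}). For (1): by the very definition of composition, $f^o \circ f$ is obtained by first pulling back the second leg of $f^o$ (namely $f \colon A \to B$) along the first leg of $f$ (again $f$), which produces exactly the kernel pair $(\Eq(f), f_1, f_2)$, and then factoring $(f_1, f_2) \colon \Eq(f) \to A \times A$ as a regular epimorphism followed by a monomorphism. Since $f_1, f_2$ are pullback projections they are jointly monic, so $(f_1, f_2)$ is already a monomorphism; its factorization has an isomorphism as regular-epimorphism part, and therefore $f^o \circ f = \Eq(f)$. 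This is just the remark made before the lemma, specialized to $g = f$.

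For (3): by (1) the equation $f^o \circ f = 1_A$ says precisely that $\Eq(f)$ and the discrete relation $\Delta_A$ represent the same subobject of $A \times A$; composing the comparison isomorphism with the reflexivity section $\delta \colon A \to \Eq(f)$ shows this is equivalent to $f_1 = f_2$. If $f$ is a monomorphism, then $f \circ f_1 = f \circ f_2$ forces $f_1 = f_2$; conversely, if $f_1 = f_2$ the universal property of the kernel pair immediately yields that $f$ is a monomorphism. Hence (3).

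For (2): here $f \circ f^o$ is computed from the pullback of $1_A$ along $1_A$, which is $A$ itself, so the arrow to be factored is $(f, f) = \Delta_B \circ f \colon A \to B \times B$. If $f$ is a regular epimorphism then, $\Delta_B$ being a monomorphism, $\Delta_B \circ f$ is already a regular-epimorphism-followed-by-monomorphism factorization; by uniqueness its image is $(B, \Delta_B)$, i.e. $f \circ f^o = 1_B$. Conversely, write $f = m \circ q$ with $q$ a regular epimorphism and $m$ a monomorphism (Theorem \ref{theo1}); then $(f,f) = \big((m \times m) \circ \Delta_I\big) \circ q$, and since a product of monomorphisms is a monomorphism, $(m \times m) \circ \Delta_I$ is a monomorphism while $q$ is a regular epimorphism, so this is the factorization of $(f,f)$ and $f \circ f^o$ is the subobject $(m \times m) \circ \Delta_I \colon I \to B \times B$. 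Equating it with $\Delta_B$ gives a comparison isomorphism $\theta$ with $(m \times m) \circ \Delta_I \circ \theta = \Delta_B$; post-composing with a product projection yields $m \circ \theta = 1_B$, so $m$ is an isomorphism and $f = m \circ q$ is a regular epimorphism.

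I do not expect a genuine obstacle here: the argument is bookkeeping with the definition of relational composition. The only points needing slight care are the converse halves of (2) and (3), where one must translate an equality of relations into an equality of subobjects and extract the comparison isomorphism in order to see that the monomorphism part of a factorization is invertible, together with the elementary facts that products of monomorphisms are monomorphisms and that regular epimorphisms are closed under composition (Proposition \ref{prop1}).
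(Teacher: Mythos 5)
The paper leaves this lemma as an exercise for the reader, and your proof is exactly the intended one: unwind the definition of relational composition and use the uniqueness of the regular epi--mono factorization; all three parts, including the converse halves of (2) and (3), check out. (One cosmetic slip in (1): the pullback is of the \emph{second} leg of $f$ against the \emph{first} leg of $f^o$ --- both of which equal the arrow $f$ --- rather than the other way round as you name them, but the computation you actually carry out, producing $\Eq(f)$ with the jointly monic pair $(f_1,f_2)$, is the correct one.)
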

The relations that are ``maps'', i.e. of the form \begin{equation}\label{map}
\xymatrix{&X \ar[dl]_{1_X} \ar[dr]^{f} & \\
X& & Y,
}
\end{equation}
for some arrow $f$ in $\Cc$, have the following additional property: 
\begin{lemma}\label{difunctional}
Any relation of the form (\ref{map})
is \emph{difunctional:} $$f \circ f^o \circ f = f.$$
\end{lemma}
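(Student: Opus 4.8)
The plan is to reduce the claim to the regular epimorphism--monomorphism factorization of $f$ together with the formal identities recorded in Lemma \ref{simple-props}. Throughout we identify an arrow with its graph, so that the functoriality of $\Gamma \colon \mathbb C \to \mathsf{Rel}(\mathbb C)$ means precisely that relational composition of graphs agrees with composition in $\mathbb C$. We shall also use the elementary fact that taking opposites reverses composites, i.e. $(S \circ R)^o = R^o \circ S^o$ for composable relations $R$ and $S$.

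First I would factor $f = m \circ q$ as in Theorem \ref{theo1}, with $q \colon X \to I$ a regular epimorphism and $m \colon I \to Y$ a monomorphism. Viewed in $\mathsf{Rel}(\mathbb C)$ this is still an equality of relations $f = m \circ q$, hence $f^o = q^o \circ m^o$. Using associativity of relational composition (Theorem \ref{associative}) one computes
\[
f \circ f^o \circ f = m \circ q \circ q^o \circ m^o \circ m \circ q = m \circ (q \circ q^o) \circ (m^o \circ m) \circ q .
\]
Now Lemma \ref{simple-props}(2) gives $q \circ q^o = 1_I$ since $q$ is a regular epimorphism, and Lemma \ref{simple-props}(3) gives $m^o \circ m = 1_I$ since $m$ is a monomorphism, so the right-hand side collapses to $m \circ q = f$, which is exactly the asserted identity $f \circ f^o \circ f = f$.

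There is no serious obstacle: once the factorization is in place the argument is a short formal manipulation in $\mathsf{Rel}(\mathbb C)$. The only point deserving a line of justification is the identity $(S \circ R)^o = R^o \circ S^o$, which follows directly from the definition of the relational composite as the regular image of $(r_1\pi_1, s_2\pi_2) \colon R \times_Y S \to X \times Z$: passing to opposites merely swaps the two projections of that image, and (up to canonical isomorphism) the same pullback $R \times_Y S$ computes both $S \circ R$ and $R^o \circ S^o$. Alternatively, in $\mathsf{Set}$ the statement is the trivial remark that $f$ is a function, but the factorization argument above has the merit of being genuinely categorical and working verbatim in any regular category.
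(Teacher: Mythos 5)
Your proof is correct, but it takes a genuinely different route from the paper's. The paper computes $f \circ f^o \circ f$ directly from the definition of relational composition: it assembles the tower of pullbacks whose top reduces to $\Eq(f)$, observes that the triple composite is the regular image of the span $(p_1, f \circ p_2)$ on $\Eq(f)$, and then uses the facts that $p_1$ is a split (hence regular) epimorphism and that $f \circ p_2 = f \circ p_1$ to identify that image with the graph $(1_A, f)$ of $f$. You instead work entirely formally in $\mathsf{Rel}(\Cc)$: factor $f = m \circ q$ via Theorem \ref{theo1}, take opposites to get $f^o = q^o \circ m^o$, and cancel using $q \circ q^o = 1_I$ and $m^o \circ m = 1_I$ from Lemma \ref{simple-props}. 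This is legitimate and not circular --- Lemma \ref{simple-props} precedes Lemma \ref{difunctional} and its (reader-supplied) proof does not use difunctionality --- but it does shift the substantive content onto Lemma \ref{simple-props} and onto the three structural facts you rightly single out: associativity (Theorem \ref{associative}), functoriality of $\Gamma$ (needed so that the relational composite of the graphs of $m$ and $q$ is the graph of $m \circ q$), and the contravariance $(S \circ R)^o = R^o \circ S^o$, for which your justification via the symmetry of the pullback $R \times_Y S$ is adequate. The paper's argument has the pedagogical merit of exercising the concrete construction of relational composites; yours is shorter and makes transparent that difunctionality of maps is a purely formal consequence of the ``regular epi'' and ``mono'' identities in $\mathsf{Rel}(\Cc)$, at the price of invoking the image factorization, which the direct proof does not need.
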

\begin{proof}
The relation $f \circ f^o \circ f = f$ is obtained as the regular image of the external graph in the following diagram,

$$\xymatrix{ & & & \Eq(f) \ar[dl]_{1_{\Eq(f)}} \ar[dr]^-{p_2} & & & \\
 & &\Eq(f) \ar[dl]_-{p_1} \ar[dr]^-{p_2}& &{ \quad A \quad }\ar[dl]_{1_A}\ar[dr]^{1_A} & & \\
 &A \ar[dl]_{1_A} \ar[dr]^f & &A \ar[dl]_f  \ar[dr]^{1_A}   & & A \ar[dr]^f \ar[dl]_{1_A} & \\
 A & & B && A& & B,
}$$
which is simply the regular image of the graph 
$$
\xymatrix{&\Eq(f) \ar[dl]_{p_1} \ar[dr]^{f \circ p_2} & \\
A& & B.
}
$$
Since $p_1 \colon \Eq(f) \rightarrow A$ is a split epimorphism, thus in particular a regular epimorphism (by Proposition \ref{epis}), we see that the relation $f \circ f^o \circ f$ is given by the relation $(1_A, f)$ in the commutative diagram
$$
\xymatrix{&\Eq(f) \ar[dl]_{p_1} \ar@{.>}[d]^{p_1} \ar[dr]^{f \circ p_2} & \\
A& A \ar[l]^{1_A}  \ar[r]_f & B,
}
$$
as desired.
\end{proof}
In the category of sets the notion of \emph{difunctional relation} was first introduced by Riguet \cite{Riguet}. A relation $R$ is difunctional if the fact that $(x,y) \in R, (z,y) \in R$ and $(z,u) \in R$ implies that $(x,u)\in R$. This property can be expressed in any regular category as follows:
\begin{defi}
A relation $(R, r_1, r_2)$ from $X$ to $Y$ in a regular category is \emph{difunctional} if $$R \circ R^o \circ R = R.$$
\end{defi}
The following notion was introduced by A. Carboni, J. Lambek and M.C. Pedicchio in \cite{CLP}, and it has been investigated in several articles in the last 30 years.
\begin{defi}
\emph{A finitely complete category $\Cc$ is called a \emph{Mal'tsev} category if any internal reflexive relation in $\Cc$ is an equivalence relation.}
\end{defi}
The following characterization of regular Mal'tsev categories can be found in \cite{CLP} (see also \cite{Meisen}). It is a good example of a proof using the so-called calculus of relations.
\begin{theorem}\label{Maltsev}
Let $\Cc$ be a regular category. Then the following conditions are equivalent:
\begin{enumerate}
\item for any pair of equivalence relations $R$ and $S$ on any object $X$ in $\Cc$, \\ $S \circ R$ is an equivalence relation;
\item for any pair of equivalence relations $R$ and $S$ on any object $X$ in $\Cc$, \\ $S \circ R = R \circ S$;
\item for any pair of kernel pairs $\Eq(f)$ and $\Eq(g)$ on any object $X$ in $\Cc$, \\ $\Eq(g)  \circ \Eq(f) = \Eq(f) \circ \Eq(g)$;
\item any relation $U$ from $X$ to $Y$ in $\mathbb C$ is difunctional; 
\item any reflexive relation $R$ on an object $X$ in $\Cc$ is an equivalence relation;
\item any reflexive relation $R$ on an object $X$ in $\Cc$ is symmetric;
\item any reflexive relation $R$ on an object $X$ in $\Cc$ is transitive.
\end{enumerate}
\end{theorem}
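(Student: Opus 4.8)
The plan is to prove the seven conditions equivalent through one cycle of implications, $(1)\Rightarrow(2)\Rightarrow(3)\Rightarrow(4)\Rightarrow(5)\Rightarrow(1)$, together with two short detours $(5)\Rightarrow(6)\Rightarrow(2)$ and $(5)\Rightarrow(7)\Rightarrow(2)$ that fold the two remaining conditions into the cycle. Everything is carried out inside the calculus of relations established above, using freely that composition of relations is monotone and that $(-)^o$ is an order-preserving involution with $(S\circ R)^o=R^o\circ S^o$, and that a composite of reflexive relations is reflexive (since $\Delta_X=\Delta_X\circ\Delta_X\le S\circ R$ when $\Delta_X\le R$ and $\Delta_X\le S$).

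Most of the links are immediate. For $(1)\Rightarrow(2)$: an equivalence relation is symmetric, so $S\circ R=(S\circ R)^o=R^o\circ S^o=R\circ S$, the last equality using symmetry of $R$ and $S$. Since kernel pairs are equivalence relations (Lemma \ref{lem8}), $(3)$ is a special case of $(2)$, so $(2)\Rightarrow(3)$. The implications $(5)\Rightarrow(1)$, $(5)\Rightarrow(6)$, $(5)\Rightarrow(7)$ hold because an equivalence relation is in particular symmetric and transitive, and $(6)\Rightarrow(2)$ repeats the $(1)\Rightarrow(2)$ argument: the composite $S\circ R$ is reflexive, hence symmetric by $(6)$, hence equal to $R\circ S$. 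For $(7)\Rightarrow(2)$ I would argue: given equivalence relations $A,B$ on $X$, the relation $A\circ B$ is reflexive, hence transitive by $(7)$, that is $A\circ B\circ A\circ B\le A\circ B$; since $\Delta_X\le B$ we get $A\circ B\circ A\le A\circ B\circ A\circ B\le A\circ B$, and since $\Delta_X\le A$ we get $B\circ A\le A\circ B\circ A$; hence $B\circ A\le A\circ B$, and exchanging $A$ and $B$ yields the reverse inclusion. Finally $(4)\Rightarrow(5)$: a reflexive relation $R$ satisfies $\Delta_X\le R$ and $\Delta_X\le R^o$, and by $(4)$ it is difunctional, $R\circ R^o\circ R=R$; then $R^o=\Delta_X\circ R^o\circ\Delta_X\le R\circ R^o\circ R=R$ gives symmetry, after which $R\circ R\le R\circ R\circ R=R\circ R^o\circ R=R$ gives transitivity, so $R$ is an equivalence relation.

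The one step requiring a genuine computation — and the place I would spend the effort — is $(3)\Rightarrow(4)$. Given a relation $U$ from $X$ to $Y$ with jointly monic $(u_1,u_2)\colon U\to X\times Y$, I would write $U=u_2\circ u_1^o$ and $U^o=u_1\circ u_2^o$, so that by Lemma \ref{simple-props}(1)
$$U\circ U^o\circ U=u_2\circ(u_1^o\circ u_1)\circ(u_2^o\circ u_2)\circ u_1^o=u_2\circ\Eq(u_1)\circ\Eq(u_2)\circ u_1^o.$$
Condition $(3)$ permits the swap $\Eq(u_1)\circ\Eq(u_2)=\Eq(u_2)\circ\Eq(u_1)$, and then the difunctionality of the maps $u_1$ and $u_2$ (Lemma \ref{difunctional}) collapses $u_2\circ\Eq(u_2)=u_2\circ u_2^o\circ u_2=u_2$ and $\Eq(u_1)\circ u_1^o=u_1^o\circ u_1\circ u_1^o=u_1^o$, leaving $U\circ U^o\circ U=u_2\circ u_1^o=U$; the inequality $U\le U\circ U^o\circ U$ being automatic, $U$ is difunctional. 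The only thing to get right here is the bookkeeping of which kernel-pair identity and which difunctionality identity is applied at each bracketing; it is not deep, but it is the sole place in the proof where one computes rather than quotes a formal property of the calculus of relations, so it is where I would be most careful.
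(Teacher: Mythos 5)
Your proof is correct, and its engine is the same as the paper's: the chain $(1)\Rightarrow(2)\Rightarrow(3)\Rightarrow(4)\Rightarrow(5)$, with the decisive step $(3)\Rightarrow(4)$ carried out exactly as in the text --- expand $U\circ U^o\circ U$ as $u_2\circ \Eq(u_1)\circ\Eq(u_2)\circ u_1^o$, swap the two kernel pairs using $(3)$, and collapse $u_2\circ u_2^o\circ u_2$ and $u_1^o\circ u_1\circ u_1^o$ by the difunctionality of maps (Lemma \ref{difunctional}). Where you genuinely diverge is in how $(6)$ and $(7)$ are attached. The paper closes its cycle through $(6)\Rightarrow(1)$, which forces it to verify both symmetry and transitivity of $S\circ R$ (the latter via $S\circ R=(S\circ S)\circ(R\circ R)=S\circ(R\circ S)\circ R$); you instead close the cycle with the immediate $(5)\Rightarrow(1)$ (a composite of reflexive relations is reflexive, hence an equivalence relation by $(5)$) and handle $(6)$ by the one-line $(6)\Rightarrow(2)$, which is a little more economical. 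More substantially, for $(7)$ the paper proves $(7)\Rightarrow(4)$ by applying transitivity to the reflexive relation $u_2^o\circ u_2\circ u_1^o\circ u_1$ and then invoking difunctionality of $u_2$ and $u_1^o$; your $(7)\Rightarrow(2)$ --- applying transitivity to the reflexive composite $A\circ B$ and squeezing $B\circ A\le A\circ B\circ A\le A\circ B\circ A\circ B\le A\circ B$ via $\Delta_X\le A$ and $\Delta_X\le B$ --- is shorter, purely order-theoretic, and avoids difunctionality entirely; it is arguably an improvement. One cosmetic point: your parenthetical remark in $(3)\Rightarrow(4)$ that $U\le U\circ U^o\circ U$ is automatic is true (it follows from the reflexivity of $\Eq(u_1)$ and $\Eq(u_2)$), but it is not needed, since your computation already produces the equality.
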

\begin{proof}
$(1) \Rightarrow (2)$ By assumption the relation $S \circ R$ is an equivalence relation, thus it is symmetric:
$$(S \circ R)^o =S \circ R.$$
Since both $S$ and $R$ are symmetric it follows that $$R \circ S = R^o \circ S^o = (S \circ R)^o = S \circ R.$$
$(2) \Rightarrow (3)$ Obvious, since any kernel pair is an equivalence relation (Lemma \ref{lem8}). \\
$(3) \Rightarrow (4)$ Any relation $(U, u_1, u_2)$ can be written as $U = u_2 \circ u_1^o$. The assumption implies that the kernel pairs $\Eq(u_1)$ and $\Eq(u_2)$ of the projections commute in the sense of the composition of relations (on the object $U$):
$$(u_2^o \circ u_2) \circ  (u_1^o \circ u_1)= (u_1^o \circ u_1) \circ  (u_2^o \circ u_2).$$ Accordingly, by keeping in mind that the relations $u_1$ and $u_2$ are difunctional (by Lemma \ref{difunctional}) and $\Eq(u_1) = u_1^o \circ u_1$ and $\Eq(u_2) = u_2^o \circ u_2$, one has:
\begin{eqnarray}
U  & =& u_2 \circ u_1^o \nonumber  \\
 & =& (u_2 \circ u_2^o \circ u_2) \circ (u_1^o \circ u_1 \circ u_1^o ) \nonumber  \\
& = & u_2 \circ (u_2^o \circ u_2) \circ (u_1^o \circ u_1) \circ u_1^o \nonumber \\
& = & u_2 \circ (u_1^o \circ u_1) \circ (u_2^o \circ u_2) \circ u_1^o \nonumber \\
& = & (u_2 \circ u_1^o) \circ (u_1 \circ u_2^o) \circ (u_2 \circ u_1^o) \nonumber \\
& = & U \circ U^o \circ U. \nonumber 
\end{eqnarray}
$(4) \Rightarrow (5)$ Let $(U,u_1, u_2)$ be a reflexive relation on an object $X$, so that $1_X \le U$. 
 By difunctionality we have: $$U^o = 1_X \circ U^o \circ 1_X \le U \circ U^o \circ U = U,$$ showing that $U$ is symmetric. On the other hand:
$$ U \circ U = U \circ 1_X \circ U \le U \circ U^o \circ U = U,$$ and $U$ is transitive.

$(5) \Rightarrow (6)$ Clear. \\
$(6) \Rightarrow (1)$
 First observe that $S \circ R$ is reflexive, since both $S$ and $R$ are reflexive:
$$ 1_X = 1_X \circ 1_X \le S \circ R.$$
By assumption the relation $S \circ R$ is then symmetric, so that $$R \circ S = R^o \circ S^o = (S \circ R)^o = S \circ R.$$
The relation $S \circ R$ is transitive:
$$S \circ R = (S \circ S) \circ (R \circ R)=  S \circ (S \circ R) \circ R= S \circ (R \circ S) \circ R = S \circ R \circ S \circ R.$$

Observe that $(5) \Rightarrow (7)$ is obvious, and let us prove that $(7) \Rightarrow (4)$.
Let $U= u_2 \circ u_1^o$ be any relation from $X$ to $Y$.
The relation $$u_2^o \circ u_2\circ  u_1^o \circ u_1$$ is reflexive, thus it is transitive by assumption. This gives the equality
$$(u_2^o \circ u_2\circ  u_1^o \circ u_1) \circ (u_2^o \circ u_2\circ  u_1^o \circ u_1) = u_2^o \circ u_2\circ  u_1^o \circ u_1, $$ yielding 
$$u_2 \circ u_2^o \circ u_2\circ  u_1^o \circ u_1 \circ u_2^o \circ u_2\circ  u_1^o \circ u_1 \circ u_1^o= u_2 \circ u_2^o \circ u_2\circ  u_1^o \circ u_1 \circ u_1^o.$$
By difunctionality of $u_2$ and $u_1^o$ we conclude that
$$ u_2 \circ  u_1^o \circ u_1 \circ u_2^o \circ u_2\circ u_1^o= u_2 \circ u_1^o,$$
and $$U \circ U^o \circ U =U.$$
\end{proof}
\begin{exs}\label{exemples-M}
The categories $\Grp$, $\Ab$, $R$-$\mathsf{Mod}$, $\mathsf{Rng}$ and $\mathsf{Grp(Top)}$ are all Mal'tsev categories. By Theorem \ref{Maltsev} (6), to see this it suffices to show that any (internal) reflexive relation $R$ on any object $X$ in these categories is symmetric.
Let us check this property for the category $\mathsf{Grp}$ of groups: given an element $(x,y)$ of a reflexive relation $R$ which is also a subgroup of $X \times X$, we know that its inverse $(x^{-1}, y^{-1})$ is also in $R$ and, by reflexivity, both $(x,x)$ and $(y,y)$ belong to $R$. It follows that 
$$(x,x) \cdot (x^{-1}, y^{-1}) \cdot (y,y) = (x \cdot x^{-1} \cdot y, x \cdot y^{-1}\cdot y)= (y,x) \in R$$  and $\mathsf{Grp}$ is a Mal'tsev category. 
An inspection of the proof for $\mathsf{Grp}$ shows that the argument is still valid if the theory of an algebraic variety has a term $p(x,y,z)$ such that $p(x,y,y)=x$ and $p(x,x,y)= y$. Varieties of algebras having such a ternary term $p$ are called Mal'tsev varieties  \cite{Smith}, or $2$-permutable varieties, and the term $p$ a Mal'tsev operation. This terminology is motivated by the famous Mal'tsev theorem asserting that a variety $\mathbb V$ of algebras has the property that each pair $R$ and $S$ of congruences on an algebra $A$ in $\mathbb V$ permute, i.e. $R \circ S = S \circ R$ if and only if its theory has a ternary Mal'tsev operation \cite{Mal}.

Of course, any variety of algebras whose theory contains the operations and identities of the theory of groups is a Mal'tsev variety.

For a different example, consider the variety $\mathsf{QGrp}$ of quasigroups \cite{Smith}: its algebraic theory has a multiplication $\cdot$, a left division $\backslash$ and a right division $/$ such that $x\backslash (x \cdot y) = y$, $(x \cdot y) /y =x$, $x\cdot (x \backslash y)=y$ and $(x/y)\cdot y = x$. \\ A Mal'tsev operation for the theory of quasigroups is given by the term $$p(x,y,z) = (x/(y\backslash y))\cdot (y\backslash z),$$
since $$p(x,y,y) = (x/(y\backslash y))\cdot (y\backslash y) = x,$$
and $$p(x,x,y) = (x/(x\backslash x))\cdot (x\backslash y) = ((x \cdot (x \backslash x)/(x\backslash x))\cdot (x\backslash y)) = x \cdot (x\backslash y) = y.$$
The category $\mathsf{Heyt}$ of Heyting algebras is a Mal'tsev variety \cite{Johnstone}, with a Mal'tsev operation defined by the term
$$p(x,y,z) = ((x\rightarrow y ) \rightarrow z) \wedge ((z \rightarrow y) \rightarrow x)). $$
For the axioms and basic properties of Heyting algebras we refer the reader to \cite{Johnstone}, or to the Chapter \emph{Notes on point-free topology} \cite{PP} in this volume. One observes that \begin{eqnarray}
p(x,x,y) & = & ((x\rightarrow x ) \rightarrow y) \wedge ((y \rightarrow x) \rightarrow x)) \nonumber  \\
& = & (1 \rightarrow y) \wedge ((y \rightarrow x) \rightarrow x)) \nonumber \\
& =& y \wedge ((y \rightarrow x) \rightarrow x))  \nonumber \\
& = & y \nonumber
\end{eqnarray}
and 
 \begin{eqnarray}
 p(x,y,y) & = & ((x\rightarrow y ) \rightarrow y) \wedge ((y \rightarrow y) \rightarrow x)) \nonumber  \\
 & = & ((x\rightarrow y ) \rightarrow y) \wedge (1 \rightarrow x) \nonumber \\
 & = & ((x\rightarrow y ) \rightarrow y) \wedge x \nonumber \\
 & = & x. \nonumber
 \end{eqnarray}
Other examples of regular Mal'tsev categories are: any regular additive category, therefore in particular any abelian category \cite{Buch}, and the dual of any elementary topos \cite{CKP}. 
The category of ${\mathbb C}^*$-algebras and the
category $\mathsf{Hopf}_{K, coc}$ of cocommutative Hopf algebras over a field $K$ are also regular Mal'tsev categories \cite{GRos,GSV}.

On the other hand, the categories $\mathsf{Set}$ of sets and $\mathsf{Mon}$ of monoids are regular categories which are not Mal'tsev ones. Indeed, the usual order relation $\le$ on $\mathbb N$ is an internal reflexive relation (both in $\mathsf{Set}$ and in $\mathsf{Mon}$) which is not symmetric.
\end{exs}
An important property of regular Mal'tsev categories is expressed in terms of diagrams of the form
\begin{equation}\label{Regular-pushout}
	\xymatrix{
		C  \ar@{->>}[r]^{c} \ar@<-2pt>[d]_{g} & A \ar@<-2pt>[d]_f\\
		D   \ar@{->>}[r]_d \ar@<-2pt>[u]_{t} & B  \ar@<-2pt>[u]_s }
\end{equation}
where $d \circ g = f \circ c$, $c \circ t = s \circ d$, $g \circ t = 1_D$, $f \circ s = 1_B$, $c$ and $d$ are regular epimorphisms. As observed in \cite{Gran} such a square is always a pushout. The following result is due to Bourn (see also \cite{CKP}): here we give an alternative proof using the calculus of relations as in \cite{GR2}:
\begin{prop}\cite{Bourn}
A regular category $\mathbb C$ is a Mal'tsev category if and only if any pushout of the form \eqref{Regular-pushout} has the property that the canonical morphism $(g,c) \colon C \rightarrow D \times_B A$ to the pullback of $d$ and $f$ is a regular epimorphism.
\end{prop}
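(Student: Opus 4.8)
The plan is to pass entirely into the calculus of relations, where the statement becomes an assertion about difunctionality.

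I would first record a relational reformulation. Composing the comparison arrow $(g,c)\colon C\rightarrow D\times_B A$ with the two pullback projections gives back $g$ and $c$, so its regular image (Theorem \ref{theo1}), viewed as a relation from $D$ to $A$, is $c\circ g^o$; and the pullback $D\times_B A$ itself, as a relation from $D$ to $A$, is $f^o\circ d$ (by the description of pullback-relations recalled before Lemma \ref{simple-props}). Hence $(g,c)$ is a regular epimorphism if and only if $c\circ g^o=f^o\circ d$. Next I would prove that, in \emph{every} square of the form \eqref{Regular-pushout},
$$ f^o\circ d \;=\; (c\circ g^o)\circ(c\circ g^o)^o\circ(c\circ g^o), $$
so that $(g,c)$ is a regular epimorphism precisely when the relation $c\circ g^o$ is difunctional. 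This identity is obtained in two steps. Using that $c$ is a regular epimorphism ($c\circ c^o=1_A$, Lemma \ref{simple-props}), that $(f\circ c)^o=(d\circ g)^o$, and that $d^o\circ d=\Eq(d)$, one gets $f^o\circ d=c\circ g^o\circ\Eq(d)$. Secondly, $\Eq(d)=g\circ\Eq(c)\circ g^o$: the inclusion $g\circ\Eq(c)\circ g^o\le\Eq(d)$ follows from $d\circ g=f\circ c$, and for the reverse one uses $c\circ t=s\circ d$, whence $t^o\circ c^o=d^o\circ s^o$, together with the fact that $s$ is a split monomorphism (since $f\circ s=1_B$), so $s^o\circ s=1_B$ (Lemma \ref{simple-props}); this yields $t^o\circ\Eq(c)\circ t=d^o\circ d=\Eq(d)$, and since $g\circ t=1_D$ forces $t\le g^o$, monotonicity of relational composition gives $\Eq(d)\le g\circ\Eq(c)\circ g^o$. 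Substituting $\Eq(d)=g\circ\Eq(c)\circ g^o$ into $f^o\circ d=c\circ g^o\circ\Eq(d)$ and regrouping with $g^o\circ g=\Eq(g)$, $c^o\circ c=\Eq(c)$ gives the displayed identity.

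The forward implication is then immediate: if $\Cc$ is a Mal'tsev category, by Theorem \ref{Maltsev}(4) every relation in $\Cc$ is difunctional, in particular $c\circ g^o$; hence $(g,c)$ is a regular epimorphism.

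For the converse, assume that $(g,c)$ is a regular epimorphism in every square \eqref{Regular-pushout}, i.e. (by the reformulation) that $c\circ g^o$ is difunctional for each such square. By Theorem \ref{Maltsev} it suffices to prove that every reflexive relation is an equivalence relation, and a reflexive difunctional relation $U$ is automatically one, since $U^o=1\circ U^o\circ 1\le U\circ U^o\circ U=U$ gives symmetry and $U\circ U=U\circ 1\circ U\le U\circ U^o\circ U=U$ gives transitivity (this is the argument $(4)\Rightarrow(5)$ in the proof of Theorem \ref{Maltsev}). So, given a reflexive relation $(R,r_1,r_2)$ on $X$, it remains to build a square of the form \eqref{Regular-pushout} from which the difunctionality of $c\circ g^o$ entails that of $R$ — for instance one with $c\circ g^o=R$. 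The reflexivity section $\delta$ makes $r_1,r_2\colon R\rightarrow X$ split, hence regular, epimorphisms, which are the natural candidates for the vertical arrows. The delicate point — and the step I expect to be the main obstacle — is to complete the square with compatible sections $t$ of $g$ and $s$ of $f$ satisfying $c\circ t=s\circ d$ without forcing one of the horizontal arrows to be an isomorphism, which would trivialise the square and lose all information (the naïve choice $t=\delta$ does exactly that). This requires a suitable enlargement of $R$ — for instance taking $C$ to be the pullback $R\times_X R$, on which $r_1,r_2$ acquire further, non-diagonal sections — together with a careful choice of $t$, and defining $d$ and $s$ from the regular-epimorphism/monomorphism factorisation of $c\circ t$. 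Granting such a square, the hypothesis makes $R$ difunctional, hence an equivalence relation, so $\Cc$ is a Mal'tsev category by Theorem \ref{Maltsev}.
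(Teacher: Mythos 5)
Your relational reformulation and the forward implication are correct, and in fact slightly sharper than what is strictly needed: the identity $f^o\circ d=(c\circ g^o)\circ(c\circ g^o)^o\circ(c\circ g^o)$, valid in every square \eqref{Regular-pushout}, does reduce the regularity of $(g,c)$ to the difunctionality of the single relation $c\circ g^o$, and your derivation of it (via $c\circ c^o=1_A$, $s^o\circ s=1_B$, $t\le g^o$ and the two commutation conditions) is sound. This is essentially the same computation as the paper's, repackaged so that the Mal'tsev hypothesis enters only through Theorem \ref{Maltsev}(4) rather than through the permutation $\Eq(g)\circ\Eq(c)=\Eq(c)\circ\Eq(g)$.

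The converse, however, has a genuine gap, and it is exactly the one you flag yourself: you never produce the square. Your strategy requires, for an arbitrary reflexive relation $R$ on $X$, a square of type \eqref{Regular-pushout} with $c\circ g^o=R$ (or at least one whose difunctionality forces that of $R$), and as you observe the obvious candidate $(C,g,c)=(R,r_1,r_2)$ with $t=\delta$ forces $d$ to be a split monomorphism and hence an isomorphism, which destroys the statement; the suggested repair via $C=R\times_X R$ and an unspecified choice of $t$, $d$, $s$ is not carried out and is not obviously completable (note that $f$ and $B$ are never even defined in your sketch). The way out is to aim at a different clause of Theorem \ref{Maltsev}: it suffices to show that kernel pairs of regular epimorphisms permute (condition (3)). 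Given regular epimorphisms $f\colon X\to Y$ and $g\colon X\to Z$, the direct image of $\Eq(f)$ along $g$ yields the square \eqref{direct-image}, in which the vertical arrows are the kernel-pair projections $f_1,f_2$ and $r_1,r_2$, split by the reflexivity morphisms, and these splittings are automatically compatible with the horizontal regular epimorphisms $\gamma$ and $g$. Applying your hypothesis to the two squares $(f_1,r_1)$ and $(f_2,r_2)$ gives $\gamma\circ f_1^o=r_1^o\circ g$ and $f_2\circ\gamma^o=g^o\circ r_2$, and a short relational computation then yields $\Eq(f)\circ\Eq(g)=\Eq(g)\circ\Eq(f)$. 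In short: the splittings needed for \eqref{Regular-pushout} come for free when the vertical arrows are kernel-pair projections, which is why the argument must pass through equivalence relations that are kernel pairs rather than through an arbitrary reflexive relation.
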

\begin{proof}
The relation $(D \times_B A, p_1, p_2)$ which is the pullback of $d$ and $f$ can be expressed as the composite $f^o \circ d$. The regular image of $(g,c) \colon C \rightarrow D \times_B A$ is $c \circ g^o$, so that $(g,c)$ is a regular epimorphism if and only if $f^o \circ d = c \circ g^o$. Now the commutativity conditions on the square \eqref{Regular-pushout} imply that the regular image $\Eq (c)$ along $g$ is $ \Eq(d)$: $g(\Eq (c)) = \Eq(d)$.
$$\xymatrix@=30pt{\mathsf{Eq}(c) \ar@{.>>}[r] \ar@{>->}[d]_{(p_1,p_2)}  & \mathsf{Eq}(d) \ar@{>->}[d]^{(p_1,p_2)}   \\
D \times D \ar@{->>}[r]_{g \times g} & B \times B 
}
$$
 In a regular category this condition can be expressed by the equality $g \circ c^o \circ c \circ g^o = d^o \circ d$.
Since $c \circ c^o = 1_A$ by Lemma \ref{simple-props} (2), it follows that 
\begin{eqnarray}
f^o \circ d & = & c \circ c^o \circ f^o \circ d \nonumber \\
&=& c \circ g^o \circ d^o \circ d  \nonumber \\
&=& c \circ  g^o \circ (g \circ c^o \circ c \circ g^o)  \nonumber \\
& =& c \circ c^o \circ c \circ g^o \circ g \circ g^o  \nonumber\\
& =& c \circ g^o, \nonumber
\end{eqnarray}
where the fourth equality follows from the Mal'tsev assumption: $$g^o \circ g \circ c^o \circ c= \Eq(g) \circ \Eq (c) =  \Eq(c) \circ \Eq (g) = c^o \circ c \circ g^o \circ g.$$
For the converse, by Theorem \ref{Maltsev} it suffices to show that any pair of equivalence relations $\Eq(f)$ and $\Eq(g)$ which are kernel pairs of two arrows $f$ and $g$ permute. 
Note that there is no restriction in assuming that $f$ and $g$ are regular epimorphism, thanks to Theorem \ref{theo1}. 
Consider the kernel pair $(\Eq(f), f_1,f_2)$ of $f \colon X \rightarrow Y$ and the kernel pair $(\Eq(g), g_1,g_2)$ of $g \colon X \rightarrow Z$. We then consider the regular image of $\Eq(f)$ along $g$
\begin{equation}\label{direct-image}
	\xymatrix{
		{\Eq(f)}  \ar@{->>}[r]^-{\gamma} \ar@<-2pt>[d]_{f_1}  \ar@<6pt>[d]^{f_2}& g({\Eq(f)}) \ar@<-2pt>[d]_{r_1}\ar@<6pt>[d]^{r_2}\\
		X   \ar@{->>}[r]_g \ar@<-2pt>[u]_{} & Z,  \ar@<-2pt>[u] }
\end{equation}
and observe that the assumption implies that $f_2 \circ \gamma^o = g^o \circ r_2$ and $\gamma \circ f_1^o = r_1^o \circ g$. 
We then have the following identities:
\begin{eqnarray}
\Eq(f) \circ \Eq(g) & = & f_2 \circ f_1^o \circ g^o \circ g \nonumber \\
&=& f_2 \circ \gamma^o \circ r_1^o \circ g \nonumber \\
&=& g^o \circ  r_2 \circ r_1^o \circ g  \nonumber \\
& =& g^o \circ r_2 \circ \gamma \circ f_1^o \nonumber\\
& =& g^o \circ g \circ f_2 \circ f_1^o \nonumber. \\
& = & \Eq(g) \circ \Eq(f). \nonumber
\end{eqnarray}
\end{proof}
\section{Goursat categories}
In universal algebra a weaker property than the Mal'tsev axiom is the so-called $3$-permutability of congruences. Given any two congruences $R$ and $S$ on an algebra $A$ in a variety $\mathbb V$, the following equality holds:
$$R \circ S \circ R = S \circ R \circ S.$$
\begin{defi}\cite{CLP, CKP}
A regular category $\mathbb C$ is a \emph{Goursat category} if
$$R \circ S \circ R = S \circ R \circ S$$
 for any pair of equivalence relations $R$ and $S$ on any object $X$ in $\mathbb C$.
\end{defi}
\begin{exs}
\item Any regular Mal'tsev category $\mathbb C$ is a Goursat category: indeed, given any two equivalence relations $R$ and $S$ on an object $X$ in $\mathbb C$, one has:
$$R \circ (S \circ R) =  R \circ (R \circ S) = R \circ S = R \circ (S \circ S) = (S \circ R) \circ S.$$
An example of a Goursat category which is not a Mal'tsev one will be given in Example \ref{Implication}, where we shall prove that implication algebras form a Goursat variety.
\end{exs}
Among regular categories, Goursat categories are characterized by the property that equivalence relations are stable under regular images along regular epimorphisms \cite{CKP}. Here below we give a direct proof which uses the calculus of relations:
\begin{prop}\label{image}
For a regular category $\mathbb C$ the following conditions are equivalent:
\begin{enumerate}
\item $\mathbb C$ is a Goursat category;
\item for any regular epimorphism $f \colon X \rightarrow Y$ and any equivalence relation $R$ on $X$ the regular image $f(R)$ of $R$ along $f$ is an equivalence relation.
\end{enumerate}
\end{prop}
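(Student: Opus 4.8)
The plan is to argue entirely within the calculus of relations, using Lemma \ref{simple-props} and the standard identification of the regular image: for an equivalence relation $R=(R,r_1,r_2)$ on $X$ and an arrow $f\colon X\to Y$, the regular image $f(R)$ of $R$ along $f$ is, as a relation from $Y$ to $Y$, exactly $f\circ R\circ f^o$ (recall $R=r_2\circ r_1^o$, so $f\circ R\circ f^o=(f\circ r_2)\circ(f\circ r_1)^o$ is precisely the regular image of $(f\circ r_1,f\circ r_2)\colon R\to Y\times Y$). When $f$ is a regular epimorphism one has in addition $f\circ f^o=1_Y$ and $f^o\circ f=\Eq(f)$ (Lemma \ref{simple-props} (2) and (1)). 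I would first record the reduction that, for equivalence relations $R,S$, the Goursat identity $R\circ S\circ R=S\circ R\circ S$ is equivalent to the assertion that $R\circ S\circ R$ is always transitive: this composite is automatically reflexive and symmetric, and if $W=R\circ S\circ R$ and $V=S\circ R\circ S$ are both transitive, then $R\le W$, $S\le W$ (using $1_X\le R$ and $1_X\le S$) and $W\circ W=W$ give $V\le W\circ W\circ W=W$, and symmetrically $W\le V$, hence $W=V$.

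For $(1)\Rightarrow(2)$, let $f\colon X\to Y$ be a regular epimorphism and $R$ an equivalence relation on $X$; I would verify that $f(R)=f\circ R\circ f^o$ is reflexive, symmetric and transitive. Reflexivity: $1_Y=f\circ f^o=f\circ 1_X\circ f^o\le f\circ R\circ f^o$. Symmetry: $(f\circ R\circ f^o)^o=f\circ R^o\circ f^o=f\circ R\circ f^o$. Transitivity: $f\circ R\circ f^o\circ f\circ R\circ f^o=f\circ(R\circ\Eq(f)\circ R)\circ f^o$, and since $R$ and $\Eq(f)$ are equivalence relations on $X$, the Goursat identity turns this into $f\circ(\Eq(f)\circ R\circ\Eq(f))\circ f^o$; now $f\circ\Eq(f)=f\circ f^o\circ f=f$ and $\Eq(f)\circ f^o=f^o\circ f\circ f^o=f^o$, so the composite equals $f\circ R\circ f^o$ again, i.e. $f(R)$ is idempotent, in particular transitive.

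For $(2)\Rightarrow(1)$, the idea is to exhibit $R\circ S\circ R$ as the regular image of an equivalence relation along a regular epimorphism. Given equivalence relations $R=(R,r_1,r_2)$ and $S$ on $X$, reflexivity of $R$ makes the projections $r_1,r_2\colon R\to X$ split, hence regular, epimorphisms (Proposition \ref{epis}); on the object $R$ take the equivalence relation $r_2^{-1}(S)=r_2^o\circ S\circ r_2$ (an equivalence relation, being the inverse image of one along a map). Its regular image along $r_1$ is $r_1\circ r_2^o\circ S\circ r_2\circ r_1^o=R^o\circ S\circ R=R\circ S\circ R$, using $r_2\circ r_1^o=R$, $r_1\circ r_2^o=R^o$ and the symmetry of $R$. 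By $(2)$ this is an equivalence relation, hence $R\circ S\circ R$ is transitive; exchanging the roles of $R$ and $S$ (now with $s_1\colon S\to X$ and the equivalence relation $s_2^{-1}(R)$ on $S$) shows $S\circ R\circ S$ is transitive too, and the reduction from the first paragraph yields $R\circ S\circ R=S\circ R\circ S$.

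The only genuinely clever point is, in $(2)\Rightarrow(1)$, hitting on the right equivalence relation and regular epimorphism to feed into the hypothesis — namely $r_2^{-1}(S)$ on $R$ together with the projection $r_1$ — after which the computation $r_1\circ r_2^{-1}(S)\circ r_1^o=R\circ S\circ R$ is a one-line unwinding of the relational composite, silently using associativity (Theorem \ref{associative}) and that a relation on $X$ equals $r_2\circ r_1^o$. The rest is bookkeeping with the identities $f\circ f^o=1_Y$ (for regular epimorphisms) and $\Eq(f)=f^o\circ f$, together with monotonicity of relational composition; I do not expect any pullback-stability subtleties beyond those already absorbed into the definition of the regular image and into associativity of the composition of relations.
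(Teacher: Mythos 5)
Your proof is correct and follows essentially the same route as the paper: the transitivity computation $f(R)\circ f(R)=f\circ R\circ \Eq(f)\circ R\circ f^o=f\circ \Eq(f)\circ R\circ \Eq(f)\circ f^o=f(R)$ for $(1)\Rightarrow(2)$, and for $(2)\Rightarrow(1)$ the realization of $R\circ S\circ R$ as the regular image of an inverse-image equivalence relation along a (split, hence regular) projection of $R$, followed by the standard inequality argument giving $S\circ R\circ S=R\circ S\circ R$. The only (immaterial) difference is that you use $r_1$ and $r_2^{-1}(S)$ where the paper uses $r_2$ and $r_1^{-1}(S)$.
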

\begin{proof}
$(1) \Rightarrow (2)$. When $(R, r_1, r_2)$ is an equivalence relation it is always true that that the regular image $f(R) = f \circ R \circ f^o$ along a regular epimorphism $f \colon X \rightarrow Y$ is both reflexive and symmetric. Let us then prove that $f(R)$ is also transitive: one has the equalities 
\begin{eqnarray} f(R) \circ f(R) & = & f \circ R \circ f^o \circ f \circ R \circ f^o \nonumber \\ 
& =& f \circ (f^o \circ f ) \circ R \circ ( f^o \circ f ) \circ f^o \nonumber  \\
&=& f \circ R \circ f^o \nonumber \\
&=& f(R) \nonumber 
 \end{eqnarray}
 where the second equality follows from the Goursat assumption, and the third one from Lemma \ref{difunctional}.
 
 $(2) \Rightarrow (1)$. Conversely, consider two equivalence relations $(R, r_1, r_2)$ and $(S, s_1, s_2)$ on a same object $X$ in $\mathbb C$, and observe that the arrow $r_2 \colon R \rightarrow X$ is a split epimorphism, thus in particular a regular epimorphism. Then:
 \begin{eqnarray} R \circ S \circ R & = & (r_2 \circ r_1^o)\circ  (s_2 \circ s_1^o ) \circ (r_2 \circ r_1^o) \nonumber \\
 & = & (r_2 \circ r_1^o)\circ  (s_2 \circ s_1^o ) \circ (r_2 \circ r_1^o)^o \nonumber \\
 & = & r_2 \circ ( r_1^o\circ  s_2 \circ s_1^o  \circ r_1 ) \circ r_2^o \nonumber \\
 &=& r_2 (r_1^o \circ s_2 \circ s_1^o \circ r_1)  \nonumber \\
 & =& r_2 (r_1^{-1} (S)).  \nonumber 
 \end{eqnarray}
 Recall that the inverse image $r_1^{-1} (S)$ of the equivalence relation $S$ along $r_1$ is obtained by taking the pullback 
 $$
 \xymatrix{r_1^{-1} (S) \ar[r] \ar[d] & S \ar[d]^{(s_1,s_2)} \\
 R\times R  \ar[r]_{r_1 \times r_1} & X \times X,
 }
 $$
 and $r_1^{-1} (S)$ is always an equivalence relation. By taking into account this observation and the assumption $(2)$, one deduces that the relation $r_2 (r_1^{-1} (S)) = R \circ S \circ R$ is transitive. It follows that 
  \begin{eqnarray} 
  S \circ R \circ S & \le & R\circ S \circ R\circ S\circ R \nonumber \\
  &\le  & (R \circ S \circ R) \circ (R \circ S \circ R) \nonumber \\
  &\le  & R \circ S \circ R \nonumber 
  \end{eqnarray}
  and, symmetrically, $S \circ R \circ S \le R \circ S \circ R$, hence $S \circ R \circ S = R \circ S \circ R$.
\end{proof}
\begin{ex}
Show that the regular image of an equivalence relation in $\mathsf{Set}$ is not necessarily transitive.
\end{ex}
\begin{defi}
Consider a commutative diagram \eqref{Regular-pushout}, and the induced arrow $\hat{c}$ making the following diagram commute:
$$
\xymatrix{\Eq(g) \ar@<-.5ex>[d]_{p_1} \ar@<.5ex>[d]^{p_2} \ar@{.>}[r]^{\hat{c}}& \Eq(f) \ar@<-.5ex>[d]_{p_1} \ar@<.5ex>[d]^{p_2} \\
C \ar[r]_{c} & A
}
$$
Then the square \eqref{Regular-pushout} is called a \emph{Goursat pushout} \cite{GR} when the arrow $\hat{c}$ is a regular epimorphism.
\end{defi}
The following result was proved in \cite{GR}. Here we give a different proof of one of the two implications, based on the calculus of relations :
\begin{prop}\cite{GR}\label{G-pushout}
For a regular category $\mathbb C$ the following conditions are equivalent:
\begin{enumerate}
\item $\mathbb C$ is a Goursat category;
\item any square \eqref{Regular-pushout} is a Goursat pushout.
\end{enumerate}
\end{prop}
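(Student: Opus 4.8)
The plan is to reduce both implications to a single relational observation: for a diagram of the form \eqref{Regular-pushout}, the induced arrow $\hat c\colon\Eq(g)\to\Eq(f)$ is a regular epimorphism if and only if the regular image $c(\Eq(g))=c\circ\Eq(g)\circ c^o$ agrees, as a relation on $A$, with $\Eq(f)=f^o\circ f$. Indeed $\hat c$ always factors as a regular epimorphism $\Eq(g)\epi c(\Eq(g))$ followed by the monomorphism $c(\Eq(g))\mono\Eq(f)$, so by the uniqueness of the factorization (Theorem \ref{theo1}) it is a regular epimorphism exactly when this monomorphism is invertible; and one inequality, $c(\Eq(g))\le\Eq(f)$, always holds since $\hat c$ exists. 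I would also use, without comment, that $c\circ c^o=1_A$ for a regular epimorphism and $f^o\circ f=\Eq(f)$ (Lemma \ref{simple-props}), together with the fact recalled in the proof of Bourn's proposition above (see \cite{Gran}) that the commutativity conditions defining \eqref{Regular-pushout} force $g(\Eq(c))=\Eq(d)$, i.e. $g\circ c^o\circ c\circ g^o=d^o\circ d$.

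For the implication $(1)\Rightarrow(2)$, I would argue as follows. Since $c$ is a regular epimorphism, $f\circ c=d\circ g$ gives $f=f\circ c\circ c^o=d\circ g\circ c^o$, hence $f^o\circ f=c\circ g^o\circ(d^o\circ d)\circ g\circ c^o$. Replacing $d^o\circ d$ by $g\circ c^o\circ c\circ g^o$ and setting $E:=c\circ g^o\circ g\circ c^o=c(\Eq(g))$, a direct rearrangement yields $\Eq(f)=f^o\circ f=E\circ E$. The relation $E$ is reflexive and symmetric (being the regular image of the equivalence relation $\Eq(g)$ along $c$), and since $\mathbb C$ is Goursat, Proposition \ref{image} says that $E$ is even an equivalence relation, so $E\circ E\le E$. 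Combining, $\Eq(f)=E\circ E\le E\le\Eq(f)$, whence $E=\Eq(f)$ and $\hat c$ is a regular epimorphism.

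For the converse $(2)\Rightarrow(1)$, by Proposition \ref{image} it is enough to prove that for every regular epimorphism $g\colon X\to Z$ and every equivalence relation $(R,r_1,r_2)$ on $X$ the regular image $g(R)$ (automatically reflexive and symmetric) is transitive. I would factor $(g\circ r_1,g\circ r_2)\colon R\to Z\times Z$ as a regular epimorphism $\gamma\colon R\to g(R)$ followed by a monomorphism $(\rho_1,\rho_2)\colon g(R)\to Z\times Z$, and observe that, using the reflexivity sections of $R$ and of $g(R)$, the square with horizontal arrows $\gamma,g$ and vertical arrows $r_1,\rho_1$ is precisely of the shape \eqref{Regular-pushout} (this is the square \eqref{direct-image}, with a general equivalence relation in place of a kernel pair). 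By hypothesis it is a Goursat pushout, so the induced comparison $\Eq(r_1)\to\Eq(\rho_1)$ is a regular epimorphism, which by the observation of the first paragraph means $\gamma\circ\Eq(r_1)\circ\gamma^o=\Eq(\rho_1)$, i.e. $\gamma\circ r_1^o\circ r_1\circ\gamma^o=\rho_1^o\circ\rho_1$. Conjugating this identity by $\rho_2$ and using $\rho_2\circ\gamma=g\circ r_2$, $\rho_2\circ\rho_1^o=g(R)$, $r_2\circ r_1^o=R=r_1\circ r_2^o$ and $R\circ R=R$, one computes
$$g(R)\circ g(R)=\rho_2\circ(\rho_1^o\circ\rho_1)\circ\rho_2^o=g\circ r_2\circ r_1^o\circ r_1\circ r_2^o\circ g^o=g\circ R\circ g^o=g(R),$$
so $g(R)$ is transitive; Proposition \ref{image} then gives that $\mathbb C$ is Goursat.

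I expect the routine relational computations in each chain of equalities to go through smoothly; the delicate point is the construction in $(2)\Rightarrow(1)$. One must exhibit a square of exactly the form \eqref{Regular-pushout} --- verifying the four equations $d\circ g=f\circ c$, $c\circ t=s\circ d$, $g\circ t=1$, $f\circ s=1$ and that both horizontal arrows are regular epimorphisms --- and, crucially, do this starting from an arbitrary equivalence relation $R$ rather than a kernel pair, since Proposition \ref{image} quantifies over all equivalence relations and such relations need not be effective here. One must also keep track of the fact that in the Goursat-pushout hypothesis the relevant comparison arrow is the one between the kernel pairs of the two \emph{vertical} maps, so that it correctly translates into the identity $\gamma(\Eq(r_1))=\Eq(\rho_1)$ that drives the computation.
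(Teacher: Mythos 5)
Your proposal is correct and follows essentially the same route as the paper: the direction $(2)\Rightarrow(1)$ is, up to notation, the paper's own argument --- build the square of shape \eqref{Regular-pushout} from the reflexivity splittings of $R$ and $g(R)$, read the Goursat-pushout hypothesis as $\gamma\circ \Eq(r_1)\circ\gamma^o=\Eq(\rho_1)$, and conclude $g(R)\circ g(R)^o=g\circ R\circ R^o\circ g^o=g(R)$. For $(1)\Rightarrow(2)$ the paper instead carries out a direct relational computation $c(\Eq(g))=\cdots=\Eq(f)$ using difunctionality and the identity $\Eq(c)\circ\Eq(g)\circ\Eq(c)=\Eq(g)\circ\Eq(c)\circ\Eq(g)$, whereas you first establish $\Eq(f)=E\circ E$ for $E=c(\Eq(g))$ and then invoke Proposition \ref{image} to get $E\circ E\le E$; since both variants rest on the same two facts $g(\Eq(c))=\Eq(d)$ and $c\circ c^o=1_A$, this is only a mild reorganization, and your argument is sound.
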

\begin{proof}
$(1) \Rightarrow (2)$. If $\mathbb C$ is a Goursat category then \begin{eqnarray}
c(\mathsf{Eq}(g))& = & c \circ g^o \circ g \circ c^o  \nonumber \\
& = & c \circ (c^o \circ c) \circ (g^o \circ g) \circ (c^o \circ c) \circ c^o \nonumber \\
& = & c \circ (g^o \circ g) \circ (c^o \circ c) \circ (g^o \circ g) \circ c^o \nonumber \\
& = & c \circ g^o \circ  d^o \circ d  \circ g  \circ c^o \nonumber \\
& = & c \circ c^o \circ  f^o \circ f  \circ c  \circ c^o \nonumber \\
& = &  f^o \circ f  \nonumber \\
& = &  \mathsf{Eq}( f)  \nonumber
 \end{eqnarray}
 where the third equality follows from the Goursat assumption, the fourth one from $g(\mathsf{Eq}(c) )= \mathsf{Eq}(d)$, and the sixth one from the fact that $c$ is a regular epimorphism (Lemma \ref{simple-props}).
 
$(2) \Rightarrow (1)$. Conversely, given a commutative diagram 
$$
	\xymatrix{
		{R}  \ar@{->>}[r]^-{\overline{f}} \ar@<-2pt>[d]_{r_1}  \ar@<6pt>[d]^{r_2}& f(R) = T \ar@<-2pt>[d]_{t_1}\ar@<6pt>[d]^{t_2}\\
		X   \ar@{->>}[r]_f \ar@<-2pt>[u]_{} & Y  \ar@<-2pt>[u] }
$$
where $(R, r_1,r_2)$ is an equivalence relation, $f$ is a regular epi and $(T,t_1,t_2)$ is the regular image of $R$ along $f$. We are to show that the relation $f(R)=T$ is an equivalence relation (by Proposition \ref{image}). Since the regular image of a reflexive and symmetric relation is always reflexive and symmetric, it suffices to show that $T$ is transitive.
This follows from the following computation:
\begin{eqnarray}\label{transitivity}
T \circ T & = & T \circ T^o \nonumber \\
& =& t_2 \circ t_1^o \circ t_1 \circ t_2^o \nonumber \\
& =&  t_2 \circ (\overline{f} \circ r_1^o \circ r_1 \circ \overline{f}^o) \circ t_2^o \nonumber \\
& = & f \circ r_2 \circ r_1^o \circ r_1 \circ r_2^o \circ f^o \nonumber \\
& = & f \circ R \circ R^o  \circ f^o \nonumber \\
& = & f \circ R \circ f^o \nonumber \\
& =&  T. \nonumber
\end{eqnarray}
Remark that the assumption that any square of the form \eqref{Regular-pushout} is a Goursat pushout has been used in the third equality, where it has been applied to the diagram
$$	\xymatrix{
		R  \ar@{->>}[r]^{\overline{f}} \ar@<-2pt>[d]_{r_1} & T \ar@<-2pt>[d]_{t_1}\\
		X   \ar@{->>}[r]_f \ar@<-2pt>[u]_{} & {Y.}  \ar@<-2pt>[u] }
$$
\end{proof}
To conclude this short introduction to Goursat categories we give a characterization of those varieties of universal algebras which are $3$-permutable by using the notion of Goursat pushout. This proof, originally discovered in \cite{HM}, has a categorical version which has first been given in \cite{GR}.

When $\mathbb V$ is a variety of universal algebras, we shall denote by $X = F(1)$ the free algebra on the one-element set. 
\begin{theorem}\label{categ-Goursat}
For a variety $\mathbb V$ of universal algebras the following conditions are equivalent: 
\begin{enumerate}
\item $\mathbb V$ is $3$-permutable: for any pair $R$, $S$ of congruences on any algebra $A$ in $\mathbb V$ one has the equality $$R \circ S \circ R = S \circ R \circ S;$$
\item the theory of $\mathbb V$ contains two quaternary operations $p$ and $q$ satisfying the identities $$p(x,y,y,z) =x, \quad q(x,y,y,z) = z, \quad p(x,x,y,y) = q(x,x,y,y). $$
\end{enumerate}
\end{theorem}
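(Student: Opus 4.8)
The statement is a Hagemann--Mitschke-type characterisation of $3$-permutable varieties, and the natural plan is to prove the two implications separately: $(2)\Rightarrow(1)$ is a short direct computation with the operations $p,q$, while $(1)\Rightarrow(2)$ produces $p$ and $q$ from a single ``generic'' instance of $3$-permutability on a free algebra (this is also the place where one may instead feed in the Goursat-pushout characterisation of Proposition~\ref{G-pushout}). I expect $(1)\Rightarrow(2)$ to be the delicate part.

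For $(2)\Rightarrow(1)$ I would assume $p,q$ satisfy the three identities, take congruences $R,S$ on an algebra $A$ of $\mathbb V$, and, by symmetry of the conclusion, prove only $R\circ S\circ R\subseteq S\circ R\circ S$. So suppose $a\mathrel R b\mathrel S c\mathrel R d$ and set $e=p(a,b,c,d)$, $m=q(a,b,c,d)$. Then $a=p(a,b,b,d)\mathrel S p(a,b,c,d)=e$ (since $b\mathrel S c$ and $p(x,y,y,z)=x$); $e=p(a,b,c,d)\mathrel R p(b,b,c,c)=q(b,b,c,c)\mathrel R q(a,b,c,d)=m$ (using $a\mathrel R b$, $c\mathrel R d$ and $p(x,x,y,y)=q(x,x,y,y)$); and $m=q(a,b,c,d)\mathrel S q(a,c,c,d)=d$ (since $b\mathrel S c$ and $q(x,y,y,z)=z$). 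Hence $a\mathrel S e\mathrel R m\mathrel S d$, i.e. $(a,d)\in S\circ R\circ S$; interchanging $R$ and $S$ gives the reverse inclusion, so $\mathbb V$ is $3$-permutable.

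For $(1)\Rightarrow(2)$ I would work in the free algebra $F=F(\{a_1,a_2,a_3,a_4\})$ on four generators. Let $\alpha$ be the congruence generated by $(a_2,a_3)$, so that $F/\alpha\cong F(\{x,y,z\})$ via $a_1\mapsto x$, $a_2,a_3\mapsto y$, $a_4\mapsto z$, and let $\beta$ be the congruence generated by $(a_1,a_2)$ and $(a_3,a_4)$, so that $F/\beta\cong F(\{x,y\})$ via $a_1,a_2\mapsto x$, $a_3,a_4\mapsto y$. From the chain $a_1\mathrel\beta a_2\mathrel\alpha a_3\mathrel\beta a_4$ one gets $(a_1,a_4)\in\beta\circ\alpha\circ\beta$, and $3$-permutability (applied to $R=\beta$, $S=\alpha$) gives $\beta\circ\alpha\circ\beta=\alpha\circ\beta\circ\alpha$, so there are $p,q\in F$ with $a_1\mathrel\alpha p$, $p\mathrel\beta q$ and $q\mathrel\alpha a_4$. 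Viewing $p,q$ as quaternary terms in $a_1,\dots,a_4$ and pushing these three relations forward along the quotient maps $F\to F/\alpha$ and $F\to F/\beta$ translates them into the identities $p(x,y,y,z)=x$, $q(x,y,y,z)=z$ and $p(x,x,y,y)=q(x,x,y,y)$ holding in $\mathbb V$, which is exactly $(2)$. Equivalently, this step can be phrased through Proposition~\ref{G-pushout}: applying the Goursat-pushout property to a square of free algebras whose right-hand regular epimorphism is the codiagonal $F(\{x,z\})\to F(\{w\})=F(1)$ (whose kernel pair is free on four generators) and reading the required operations off the components of the comparison morphism $\hat c$.

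The only genuine obstacle is in $(1)\Rightarrow(2)$: one must choose the two congruences on the free algebra so that the witnesses supplied by $3$-permutability translate precisely into the three prescribed identities, and keep the composition convention straight throughout. Once that set-up is fixed the translation is mechanical, and everything in $(2)\Rightarrow(1)$ is a routine verification with congruences.
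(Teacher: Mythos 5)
Your proof is correct. The direction $(2)\Rightarrow(1)$ is essentially the paper's own argument: both chase an element $(a,d)$ of $R\circ S\circ R$ through the witnesses $p(a,b,c,d)$ and $q(a,b,c,d)$ and use the three identities to land in $S\circ R\circ S$ (the paper passes through $p(a,a,b,b)=q(a,a,b,b)$ where you pass through $p(b,b,c,c)=q(b,b,c,c)$; this is immaterial). Where you genuinely diverge is in $(1)\Rightarrow(2)$. You give the classical Hagemann--Mitschke argument: apply $3$-permutability directly to the two congruences $\alpha=\mathsf{Cg}(a_2,a_3)$ and $\beta=\mathsf{Cg}((a_1,a_2),(a_3,a_4))$ on the free algebra $F(4)$, extract $p,q$ from a chain witnessing $(a_1,a_4)\in\alpha\circ\beta\circ\alpha$, and read off the identities in the quotients $F(4)/\alpha\cong F(3)$ and $F(4)/\beta\cong F(2)$. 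The paper instead routes this through Proposition~\ref{G-pushout}: it exhibits the square of codiagonals $1+\nabla_2+1\colon X+X+X+X\to X+X+X$ over $\nabla_2\colon X+X\to X$ as a Goursat pushout and obtains $p,q$ from the surjectivity of the comparison $\mathsf{Eq}(\nabla_2+\nabla_2)\to\mathsf{Eq}(\nabla_3)$ applied to the pair $(p_1,p_3)$. The two constructions produce the same terms ($\mathsf{Eq}(\nabla_2+\nabla_2)$ and the kernel congruence of $1+\nabla_2+1$ are exactly your $\beta$ and $\alpha$), but yours is self-contained and elementary, whereas the paper's deliberately showcases the categorical machinery it has just developed, which is the point of the survey. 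One small caveat: your parenthetical sketch of the categorical rephrasing is not quite right --- the right-hand vertical regular epimorphism in the relevant square is $\nabla_3\colon F(3)\to F(1)$, not $F(\{x,z\})\to F(1)$, and the claim that the kernel pair of the latter is free on four generators is not something you should rely on (it fails in general; the free-on-four-generators object in the paper's diagram is the \emph{domain} $X+X+X+X$ of the left-hand column). Since that remark is only offered as an alternative and your main argument does not use it, this does not affect the validity of the proof.
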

\begin{proof}
$(1) \Rightarrow (2)$ Consider the commutative diagram 
$$	\xymatrix@=45pt{
		X + X+ X+X   \ar@{->>}[r]^{1 + \nabla_2 + 1} \ar@<-2pt>[d]_{\nabla_2 + \nabla_2 } & X +X+X  \ar@<-2pt>[d]_{\nabla_3}\\
		X+X   \ar@{->>}[r]_{\nabla_2} \ar@<-2pt>[u]_{i_2 + i_1} & {X}  \ar@<-2pt>[u]_{i_2} }
$$
where $\nabla_k$ is the codiagonal from the $k$-indexed copower of $X$ to $X$ (for $k \in \{2,3 \}$). The vertical arrows $\nabla_2 + \nabla_2$ and $\nabla_3$ are split epimorphisms, whereas the horizontal arrows are regular epimorphisms, so that the diagram is a Goursat pushout by Proposition \ref{G-pushout}. It follows that the unique morphism $$\overline{1 + \nabla_2 + 1} \colon \mathsf{Eq}(\nabla_2 + \nabla_2) \rightarrow \mathsf{Eq}(\nabla_3)$$ in $\mathbb V$ making the diagram
\begin{equation}
	\xymatrix@=40pt{
		{  \mathsf{Eq}(\nabla_2 + \nabla_2) }  \ar@{->>}[r]^-{\overline{1 + \nabla_2 + 1}} \ar@<-2pt>[d]_{p_1}  \ar@<6pt>[d]^{p_2}&  \mathsf{Eq}(\nabla_3)\ar@<-2pt>[d]_{p_1}\ar@<6pt>[d]^{p_2}\\
		X+ X+ X+X   \ar@{->>}[r]_{1 + \nabla_2 + 1 }  &   X +X+X }
\end{equation}
commute is a regular epimorphism (here $p_1$ and $p_2$ are the kernel pair projections), thus it is surjective. Observe that the terms $p_1 (x,y,z) =x$ and $p_3(x,y,z) = z$ are identified by $\nabla_3$, so that $(p_1, p_3) \in \mathsf{Eq}(\nabla_3)$. The surjectivity of $\overline{1 + \nabla_2 + 1}$ then implies that there are terms $(p,q) \in \mathsf{Eq}(\nabla_2 + \nabla_2)$ such that  $\overline{1 + \nabla_2 + 1} (p,q) = (p_1, p_3)$. This latter property means exactly that 
$$p(x,y,y,z) =x, \quad q(x,y,y,z) = z,$$
while the fact that $(p,q) \in \mathsf{Eq}(\nabla_2 + \nabla_2)$ gives the identity
$$p(x,x,y,y) = q(x,x,y,y). $$
For the converse implication, take $R$ and $S$ two congruences on an algebra $A$ in $\mathbb V$, and let us show that  $R \circ S \circ R \le S \circ R \circ S$. For $(a,b) \in R \circ S \circ R$, let $x$ and $y$ be such that $(a,x) \in R$, $(x,y) \in S$ and $(y,b)\in R$. Then the fact that $(a,a), (x,a), (y,b), (b,b)$ are in $R$ implies that both $(p(a,x,y,b), p(a,a,b,b))$ and $(q(a,x,y,b), q(a,a,b,b))$ are in $R$. Since $p(a,a,b,b) = q(a,a,b,b)$ we deduce that $(p(a,x,y,b), q(a,x,y,b)) \in R$. On the other hand, the elements $(a, a),(x, x),(x, y), (b, b)$ are all in $S$ so that $(p(a, x, x, b),p(a, x, y, b))\in S$, $(q(a, x, x, b),q(a,
x, y, b)) \in S$, hence $(a, p(a, x, y, b))$ and $ (b, q(a, x, y, b))$ are both in $S$. We then observe that 
\begin{eqnarray}
(a, p(a, x, y, b)) &\in S & \nonumber \\
(p(a, x, y, b), q(a, x, y, b)) &\in R&  \nonumber \\
(q(a, x, y, b), b) & \in S& \nonumber
\end{eqnarray}
we conclude that $(a,b)$ belongs to $S \circ R \circ S$. It then follows that $R \circ S \circ R = S \circ R \circ S$, as desired.
\end{proof}
\begin{exe}\label{Implication}
A typical example of $3$-permutable variety, thus of a Goursat category, is provided by the variety $\mathsf{ImplAlg}$ of \emph{implication algebras} \cite{Abb}. The algebraic theory of the variety $\mathsf{ImplAlg}$ has a binary operation such that \begin{enumerate}
\item[(A)] $(xy)x=x$,
\item[(B)] $(xy)y= (yx)x$,
\item[(C)] $x(yz) = y(xz)$.
\end{enumerate}
As explained in \cite{GU}, to see that  $\mathsf{ImplAlg}$ is $3$-permutable, one first checks that the term $xx$ is a constant: indeed, the identities
\begin{eqnarray}
xx &=& [(xy)x]x   \qquad  \qquad \quad  \text{(by (A)) } \nonumber \\
&=& [(x(xy)](xy) \qquad   \qquad \text{(by (B))}  \nonumber \\
& =& x [[x(xy)]y] \qquad  \qquad \text{(by (C))} \nonumber  \\
& =&  x[[((xy)x(xy)]y]   \qquad   \text{(by (A))} \nonumber  \\
&=& x[(xy)y]  \qquad   \qquad\quad \text{(by (A))}  \nonumber  \\
&=& (xy)(xy) \qquad   \qquad \quad \text{(by (C))} \nonumber
\end{eqnarray}
imply that $$xx = [x(yy)][x(yy)] = [y(xy)][y(xy] =yy,$$
and one denotes such an equationally defined constant by $1$. This notation is justified by the fact that
$$1y  = (yy)y =y = y1.$$ 
One then verifies that the terms $p(x,y,z,u)= (zy)x$ and $q(x,y,z,u)=(yz)u$ are such that 
$$p(x,y,y,z) = (yy)x = 1x=x,$$
$$q(x,y,y,z)= (yy)z=1z=z, $$
and $$p(x,x,z,z) = (zx)x = (xz)z = q(x,x,z,z).$$ 
\end{exe}
\begin{rem}
Note that one can give a proof of the Mal'tsev theorem characterizing $2$-permutable varieties by using some categorical arguments similar to the ones in Theorem \ref{categ-Goursat}. This was first observed in \cite{CP} and, more recently, in \cite{BGJ}.
\end{rem}
\begin{rem}
A wide generalization of Theorem \ref{categ-Goursat} was obtained by P.-A Jacqmin and D. Rodelo in \cite{JR}, where a categorical approach to $n$-permutability was deve-loped. Thanks to their approach the authors have been able to characterize the property of $n$-permutability in terms of some specific stability properties of regular epimorphisms, which extend the one considered in \cite{GR3} to study Goursat categories.
\end{rem}
\subsection*{Diagram lemmas and Goursat categories}
We conclude these notes by mentioning a connection between the validity of some suitable \emph{diagram lemmas} and the permutability conditions on a regular category considered above. The classical $3\times 3$-Lemma in abelian categories \cite{Freyd} has been extended to several non-additive context by various authors (see \cite{Bourn2, ZJ}, for instance). An original extension to a non-pointed context was first established by D. Bourn in the context of regular Mal'tsev categories \cite{Bourn}. The main point in order to formulate the $3 \times 3$-Lemma in a category which does not have a $0$-object is to replace the classical notion of short exact sequence with the notion of exact fork: a diagram of the form 
$$
\xymatrix{ R \ar@<-2pt>[r]_{r_2}  \ar@<2pt>[r]^{r_1} & X \ar@{->>}[r]^f & Y
}
$$
is an \emph{exact fork} if and only if $(R, r_1,r_2)$ is the kernel pair of $f$, and $f$ is the coequalizer of $r_1$ and $r_2$.
With this notion at hand the appropriate way of expressing the $3 \times 3$-Lemma is then the following, which is called the \emph{denormalized $3 \times 3$-Lemma}: given any commutative diagram 
\begin{equation}\label{denormalized}
\xymatrix@=40pt{
\mathsf{Eq}(a)  \ar@<-2pt>[r]_{z_1} \ar@<2pt>[r]^{z_2}  \ar@<-2pt>[d]_{a_1} \ar@<2pt>[d]^{a_2}  & \mathsf{Eq}(b) \ar@<-2pt>[d]_{b_1}     \ar@<2pt>[d]^{b_2}  \ar[r]^z & \mathsf{Eq}(c) \ar@<-2pt>[d]_{c_1}     \ar@<2pt>[d]^{c_2}  \\
		\mathsf{Eq}(y) \ar@{->>}[d]_a  \ar@<-2pt>[r]_{y_2}  \ar@<2pt>[r]^{y_1}& {A}   \ar@{->>}[d]^b  \ar@{->>}[r]_y & C  \ar@{->>}[d]^c \\
		K  \ar@<-2pt>[r]_{k_2}  \ar@<2pt>[r]^{k_1}& {B}   \ar[r]_x & D
}
\end{equation}
in $\mathbb C$ such that
\begin{itemize}
\item $y_i \circ a_j = b_j \circ z_i$, $y\circ b_i = c_i \circ z$, $b\circ y_i = k_i \circ a$, $x \circ b = c \circ y$ (for $i,j \in \{1,2\}$),
\item the three columns and the middle row are exact forks,
\end{itemize}
then the upper row is an exact fork if and only if the lower row is an exact fork. \\
S. Lack observed in \cite{Lack} that this \emph{denormalized $3\times3$-Lemma} holds not only in regular Mal'tsev categories (as observed by D. Bourn \cite{Bourn}) but also in Goursat categories. Later on it turned out that the validity of the denormalized $3\times3$-Lemma actually characterizes Goursat categories among regular ones:
\begin{theorem}\cite{Lack, GR}
For a regular category $\mathbb C$ the following conditions are equivalent:
\begin{enumerate}
\item $\mathbb C$ is a Goursat category;
\item if the lower row in a diagram \eqref{denormalized} is an exact fork then the upper row is an exact fork;
\item if the upper row in a diagram \eqref{denormalized} is an exact fork then the lower row is an exact fork;
\item the \emph{denormalized $3\times3$-Lemma} holds in $\mathbb C$: the lower row is an exact fork if and only if the upper row is an exact fork.
\end{enumerate}
\end{theorem}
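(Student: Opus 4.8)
The plan is the following. Since condition $(4)$ is by definition the conjunction of $(2)$ and $(3)$, and each of the latter is a trivial consequence of $(4)$, it is enough to prove $(1)\Rightarrow(4)$, $(2)\Rightarrow(1)$ and $(3)\Rightarrow(1)$; the equivalences $(1)\Leftrightarrow(2)$, $(1)\Leftrightarrow(3)$ and $(1)\Leftrightarrow(4)$ then all follow. In every step I would translate the data of diagram \eqref{denormalized} into the calculus of relations. Writing $E=\mathsf{Eq}(y)$ and $F=\mathsf{Eq}(b)$, the exactness of the three columns and of the middle row says exactly that $C=A/E$, $B=A/F$, $D=C/\mathsf{Eq}(c)$ and $K=E/\mathsf{Eq}(a)$. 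Two observations, valid in any regular category, will be used repeatedly. First, the comparison $x\colon B\to D$ with $x\circ b=c\circ y$ is automatically a regular epimorphism, since it occurs in a commutative square together with the regular epimorphisms $b$, $y$, $c$ (Proposition~\ref{prop1}); hence, by Exercise~\ref{pairenoyau}, the lower row is an exact fork if and only if $(K,k_1,k_2)=\mathsf{Eq}(x)$, and the upper row is an exact fork if and only if $z\colon\mathsf{Eq}(b)\to\mathsf{Eq}(c)$ is a regular epimorphism whose kernel pair is the prescribed $\mathsf{Eq}(a)$. Second, from $k_i\circ a=b\circ y_i$ and the fact that $a$ is a regular epimorphism one reads off that the image of $(k_1,k_2)\colon K\to B\times B$ is the regular image $b(E)$, while $z$ is the corestriction to $\mathsf{Eq}(c)$ of $(y\times y)\circ(b_1,b_2)$, so that its image in $C\times C$ is $y(F)$. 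Thus ``lower row exact'' amounts to $b(E)=\mathsf{Eq}(x)$ together with $(k_1,k_2)$ being jointly monic, and ``upper row exact'' to $y(F)=\mathsf{Eq}(c)$ together with $\mathsf{Eq}(a)=\mathsf{Eq}\bigl((b\times b)|_E\bigr)$.

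For $(1)\Rightarrow(4)$, set $\Theta=\mathsf{Eq}(x\circ b)=\mathsf{Eq}(c\circ y)$, an equivalence relation on $A$ containing both $E$ and $F$; using Lemma~\ref{simple-props} and the relational calculus one has $\mathsf{Eq}(x)=b(\Theta)$, $\mathsf{Eq}(c)=y(\Theta)$, and the correspondence identity $b^{-1}(b(\Theta))=\Theta$. If the lower row is exact then $b(E)=\mathsf{Eq}(x)=b(\Theta)$, whence $\Theta=b^{-1}(b(E))=F\circ E\circ F$ and therefore $\mathsf{Eq}(c)=y(\Theta)=y(F)\circ y(F)$; the Goursat hypothesis enters here, through Proposition~\ref{image}, to say that $y(F)$ is an equivalence relation, hence idempotent, so that $\mathsf{Eq}(c)=y(F)$ and $z$ is a regular epimorphism. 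Moreover $K=\mathsf{Eq}(x)$ forces $a$ to be the corestriction of $(b\times b)|_E$, so $\mathsf{Eq}(a)=\mathsf{Eq}\bigl((b\times b)|_E\bigr)$; hence the upper row is exact, which is $(1)\Rightarrow(2)$. The implication $(1)\Rightarrow(3)$ is the mirror argument: if the upper row is exact then $z$ is a regular epimorphism, so $y(F)=\mathsf{Eq}(c)$, hence $\Theta=E\circ F\circ E$ and $\mathsf{Eq}(x)=b(\Theta)=b(E)\circ b(E)=b(E)$ by transitivity of $b(E)$ (Proposition~\ref{image} again); and $\mathsf{Eq}(a)=\mathsf{Eq}\bigl((b\times b)|_E\bigr)$ forces $(k_1,k_2)$ to be jointly monic, so $K=\mathsf{Eq}(x)$ and the lower row is exact. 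If one prefers, the purely formal steps here can be replaced by appeals to the Barr--Kock Theorem~\ref{BK} and Lemma~\ref{lefttoright}. Note that the Goursat property is used at exactly one point in each direction, namely to know that the regular image of an equivalence relation along a regular epimorphism is again transitive.

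For the converses $(2)\Rightarrow(1)$ and $(3)\Rightarrow(1)$ I would, starting from data witnessing a possible failure of the Goursat property (an equivalence relation $R$ together with a regular epimorphism out of its ambient object, or directly a square of the form \eqref{Regular-pushout}), build a specific instance of \eqref{denormalized} in which one of the two rows is an exact fork with no hypothesis on $\mathbb C$: for $(3)\Rightarrow(1)$ one whose upper row consists of kernel pairs by construction, and for $(2)\Rightarrow(1)$ one whose lower row is a kernel-pair fork. The hypothesis --- $(3)$, respectively $(2)$ --- then forces the partner row to be exact, and running the relational translation above backwards shows, through its single non-formal step, that $b(E)$ (respectively $y(F)$) is transitive for the regular epimorphism and equivalence relation at hand. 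One concludes that $\mathbb C$ is a Goursat category either directly from Proposition~\ref{image}, or by carrying out the construction on an arbitrary square \eqref{Regular-pushout} and invoking Proposition~\ref{G-pushout}.

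The main obstacle, and the one genuinely inventive point, is this last construction: one must arrange the test diagram so that one row is exact unconditionally while the forced exactness of the other row encodes the full Goursat property and not merely an accident of the chosen data --- in particular one must pin down the right equivalence relation $\mathsf{Eq}(a)$ (equivalently, the right object $K$ with its pair of maps to $B$) making the left column an exact fork. By contrast, $(1)\Rightarrow(4)$ becomes, once the relational dictionary of the first paragraph is in place, an essentially routine computation whose only substantive ingredient is Proposition~\ref{image}.
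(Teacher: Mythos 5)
The paper itself does not prove this theorem: it only states it, with a pointer to \cite{Lack, GR} and the remark that the calculus of relations and the notion of Goursat pushout are the central tools, so there is no in-text argument to measure you against. Your reduction to the three implications $(1)\Rightarrow(4)$, $(2)\Rightarrow(1)$, $(3)\Rightarrow(1)$ is the right one, and your treatment of $(1)\Rightarrow(4)$ is essentially sound: the automatic regularity of $x$, the identities $\mathsf{Eq}(x)=b(\Theta)$, $b^{-1}(b(\Theta))=\Theta$, $\Theta=F\circ E\circ F$ (resp.\ $E\circ F\circ E$), and the single appeal to Proposition \ref{image} to collapse $y(F)\circ y(F)$ to $y(F)$ are all correct. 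The one point you pass over too quickly is the identification of the kernel pair of $a$ (a relation on $\mathsf{Eq}(y)$) with the kernel pair of $z$ (a relation on $\mathsf{Eq}(b)$): these live over different objects, and one has to check that both are the limit of the same ``double relation'' diagram in $A^{4}$ before ``$\mathsf{Eq}(a)=\mathsf{Eq}\bigl((b\times b)|_E\bigr)$, hence the upper row is exact'' makes sense. That verification is routine, but it belongs in the proof.

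The genuine gap is in $(2)\Rightarrow(1)$ and $(3)\Rightarrow(1)$, which you do not prove: you describe the shape of an argument (``build a specific instance of \eqref{denormalized} in which one row is exact unconditionally'') and then explicitly label the construction of that instance as ``the one genuinely inventive point'' without carrying it out. But that construction \emph{is} the converse; without it nothing has been established. Concretely, for $(2)\Rightarrow(1)$ one starts from an arbitrary square of the form \eqref{Regular-pushout} and takes, in the notation of \eqref{denormalized}: the middle row to be the kernel-pair fork of the regular epimorphism $c$, the middle and right columns to be the kernel-pair forks of the split epimorphisms $g$ and $f$, the left column to be the fork of the induced morphism $\mathsf{Eq}(c)\to\mathsf{Eq}(d)$ (a regular epimorphism because $g(\mathsf{Eq}(c))=\mathsf{Eq}(d)$ for any such square, as recalled in the text), and the lower row to be the kernel-pair fork of $d$, which is exact with no hypothesis on $\mathbb C$. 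Hypothesis $(2)$ then forces the upper row to be exact, which says precisely that $\hat{c}\colon\mathsf{Eq}(g)\to\mathsf{Eq}(f)$ is a regular epimorphism, i.e.\ that the square is a Goursat pushout, and Proposition \ref{G-pushout} concludes; a transposed test diagram, with the upper row exact by construction, is needed for $(3)\Rightarrow(1)$. One must also verify that the test diagram satisfies \emph{all} the hypotheses of \eqref{denormalized} --- three exact columns, an exact middle row, and the required commutativities --- before the hypothesis can be invoked. That verification and the choice of the diagram are where the actual work of the converse lives, and your proposal contains none of it.
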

We would like to point out that both the calculus of relations and the notion of \emph{Goursat pushout} play a central role in the proof of this result.
Note that a unification of both the classical $3\times3$-Lemma and of the denormalized one in the context of star-regular categories is also possible \cite{GJR}. Further results linking the Goursat property to natural conditions appearing in universal algebra - in relationship to congruence modularity - have been investigated in \cite{GRT} (see also the references therein).
 Finally, let us mention that also Mal'tsev categories can be characterized via a suitable diagrammatic condition that is stronger than the denormalized $3\times3$-Lemma, called the \emph{Cuboid Lemma} \cite{GR2}. 

\end{document}